\let\oldmarginpar\marginpar
\renewcommand\marginpar[1]{\-\oldmarginpar[\raggedleft\footnotesize #1]%
{\raggedright\footnotesize #1}}
\begin{document}

\newtheorem{theorem}{Theorem}[section]
\newtheorem{corollary}[theorem]{Corollary}
\newtheorem{lemma}[theorem]{Lemma}
\newtheorem{proposition}[theorem]{Proposition}
\theoremstyle{definition}
\newtheorem{definition}[theorem]{Definition}
\theoremstyle{remark}
\newtheorem{remark}[theorem]{Remark}
\theoremstyle{definition}
\newtheorem{example}[theorem]{Example}
\theoremstyle{acknowledgment}
\newtheorem{acknowledgment}[theorem]{Acknowledgment}

\numberwithin{equation}{section}

\title[Extremities for statistical submanifolds]{Extremities for statistical submanifolds in \\ Kenmotsu statistical manifolds}

\author[A.N. Siddiqui]{Aliya Naaz Siddiqui}
\address{Department of Mathematics, Faculty of Natural Sciences, Jamia Millia Islamia, New Delhi - 110025, India}
\email{aliya148040@st.jmi.ac.in}

\author[Y.J. Suh]{Young Jin Suh}
\address{Department of Mathematics and RIRCM, College of Natural Sciences, Kyungpook National University, Daegu 41566, South Korea}
\email{yjsuh@knu.ac.kr}

\author[O. Bahad${\i}$r]{O$\breve{g}$uzhan Bahad${\i}$r}
\address{Department of Mathematics, Faculty of Science and Letters, Kahramanmaras Sutcu Imam University, Kahrmanmaras, TURKEY}
\email{oguzbaha@gmail.com}

\subjclass{53C05, 53C40, 53A40.}

\keywords{Ricci curvature; Chen-Ricci inequality; Kenmotsu statistical manifolds; Statistical immersions.}

\begin{abstract}
Kenmotsu geometry is a valuable part of contact geometry with nice applications in other fields such as theoretical physics. In this article, we study the statistical counterpart of a Kenmotsu manifold, that is, Kenmotsu statistical manifold with some related examples. We investigate some statistical curvature properties of Kenmotsu statistical manifolds. We prove that a Kenmotsu statistical manifold is not a Ricci-flat statistical manifold with an example. Finally, we prove a very well-known Chen-Ricci inequality for statistical submanifolds in Kenmotsu statistical manifolds of constant $\phi-$sectional curvature by adopting optimization techniques on submanifolds. This article ends with some concluding remarks.
\end{abstract}

\maketitle

\section{Introduction}

The geometry of a manifold has an important place in statistics as the statistical model often forms a geometrical manifold. The notion of statistical manifolds was first initiated by Amari in \cite{l} and applied by Lauritzen in \cite{LS}.

A Riemannian manifold $(\overline{B}, \overline{g})$ with a Riemannian metric $\overline{g}$ is said to be a statistical manifold $(\overline{B}, \overline{\nabla}, \overline{g})$ if $\overline{\nabla}\overline{g}$ is symmetric and a pair of torsion-free affine connections $\overline{\nabla}$ and $\overline{\nabla}^{\ast}$ on $\overline{B}$ satisfies
\begin{eqnarray*}
\mathrm{G}_{1} \overline{g}(\mathrm{E}_{1}, \mathrm{F}_{1}) = \overline{g}(\overline{\nabla}_{\mathrm{G}_{1}} \mathrm{E}_{1}, \mathrm{F}_{1}) + \overline{g}(\mathrm{E}_{1}, \overline{\nabla}_{\mathrm{G}_{1}}^{\ast}\mathrm{F}_{1}),
\end{eqnarray*}
for any $\mathrm{E}_{1}, \mathrm{F}_{1}, \mathrm{G}_{1} \in \Gamma(T\overline{B})$. Here $\overline{\nabla}^{\ast}$ is called the dual connection on $\overline{B}$.

\begin{remark}\label{g}
We have
\begin{enumerate}
\item[(1)] $\overline{\nabla} = (\overline{\nabla}^{\ast})^{\ast}$.
\item[(2)] $2\overline{\nabla}^{\overline{g}} = \overline{\nabla} + \overline{\nabla}^{\ast}$,\\
where $\overline{\nabla}^{\overline{g}}$ is the Levi-Civita connection of $\overline{g}$ on $\overline{B}$.
\item[(3)] if $(\overline{\nabla}, \overline{g})$ is a statistical structure on $\overline{B}$, then $(\overline{\nabla}^{\ast}, \overline{g})$ is again a statistical structure.
\end{enumerate}
\end{remark}

For a statistical manifold $(\overline{B}, \overline{\nabla}, \overline{g})$, set $\overline{\mathbb{K}} = \overline{\nabla} - \overline{\nabla}^{\overline{g}}$. Then $\overline{\mathbb{K}} \in \Gamma(T\overline{B}^{(1,2)})$
\begin{eqnarray*}
\overline{\mathbb{K}}_{\mathrm{E}_{1}}\mathrm{F}_{1} = \overline{\mathbb{K}}_{\mathrm{F}_{1}}\mathrm{E}_{1},
\end{eqnarray*}
and
\begin{eqnarray*}
\overline{g}(\overline{\mathbb{K}}_{\mathrm{E}_{1}}\mathrm{F}_{1}, \mathrm{G}_{1}) = \overline{g}(\mathrm{F}_{1}, \overline{\mathbb{K}}_{\mathrm{E}_{1}}\mathrm{G}_{1})
\end{eqnarray*}
hold for any $\mathrm{E}_{1}, \mathrm{F}_{1}, \mathrm{G}_{1} \in \Gamma(T\overline{B})$. Conversely, if $\overline{\mathbb{K}}$ satisfies above relations, then $(\overline{B}, \overline{\nabla} = \overline{\nabla}^{\overline{g}} + \overline{\mathbb{K}}, \overline{g})$ becomes a statistical manifold and write $\overline{\mathbb{K}}_{\mathrm{E}_{1}}\mathrm{F}_{1}$ as $\overline{\mathbb{K}}(\mathrm{E}_{1},\mathrm{F}_{1})$.

The statistical curvature tensor field $\overline{S}^{\overline{\nabla}, \overline{\nabla}^{\ast}} = \overline{S}$ with respect to $\overline{\nabla}$ and $\overline{\nabla}^{\ast}$ of $(\overline{B}, \overline{\nabla}, \overline{g})$ is \cite{f2}
\begin{eqnarray}\label{2.1}
\overline{S}(\mathrm{E}_{1}, \mathrm{F}_{1})\mathrm{G}_{1} = \overline{R}^{\overline{g}}(\mathrm{E}_{1}, \mathrm{F}_{1})\mathrm{G}_{1} + [\overline{\mathbb{K}}_{\mathrm{E}_{1}}, \overline{\mathbb{K}}_{\mathrm{F}_{1}}]\mathrm{G}_{1},
\end{eqnarray}
for any $\mathrm{E}_{1}, \mathrm{F}_{1}, \mathrm{G}_{1} \in \Gamma(T\overline{B})$. Here $\overline{R}^{\overline{g}}$ denotes the curvature tensor field with respect to $\overline{\nabla}^{\overline{g}}$.

K. Kenmotsu \cite{ken} studied the third class (that is, the warped product spaces $B \times_{s} M$, where $B$ is a line and $M$ a Kaehlerian manifold) in Tanno's classification of connected almost contact metric manifolds whose automorphism group has a maximum dimension. He analysed the properties of $B \times_{s} M$ and characterized it by tensor equations. Nowadays such a manifold is known by Kenmotsu manifold. Lately, Furuhata et al. \cite{f2} studied the statistical counterpart of a Kenmotsu manifold and introduced the notion of Kenmotsu statistical manifolds by putting an affine connection on a Kenmotsu manifold. They gave a method how to form a Kenmotsu statistical manifold as the warped product of a holomorphic statistical manifold \cite{f} and a line.

In 1993, B.-Y. Chen \cite{chen2} initiated the study to establish some bonds between the intrinsic and extrinsic invariants of submanifolds. He \cite{chen} proved a relation between the main extrinsic invariant squared mean curvature and the main intrinsic invariant Ricci curvature for any submanifold in real space forms. This inequality is celebrated as Chen-Ricci inequality. Since then numerous geometers obtained the similar inequalities for different classes of submanifolds and ambient spaces. T. Oprea \cite{toprea} used optimization techniques applied in the setup of Riemannian geometry to derive the Chen-Ricci inequality. With a pair of dual connections, Aydin et al. \cite{a} derived a Chen-Ricci inequality for statistical submanifolds in a statistical manifold of constant curvature. Also, A. Mihai et al. \cite{a2} established a similar inequality with respect to a sectional curvature of the ambient Hessian manifold. Recently, Siddiqui et al. studied statistical warped products as submanifolds of statistical manifolds. For statistical warped products statistically immersed in a statistical manifold of constant curvature, they proved Chen's inequality involving scalar curvature, the squared mean curvature, and the Laplacian of warping function (with respect to the Levi--Civita connection) in \cite{al}. Recently, Siddiqui et al. \cite{aaliya} obtained inequality for Ricci curvature of submanifolds in statistical manifolds of constant(quasi-constant) curvature by Oprea's optimization method.

Motivated by the above studies, we establish the Chen-Ricci inequality with respect to a statistical sectional curvature of the ambient Kenmotsu statistical manifold of constant $\phi-$sectional curvature. Furthermore, we study the equality cases by using the most interesting result, namely, optimization on Riemannian submanifolds as follows:

Let $(N, g)$ be a Riemannian submanifold of a Riemannian manifold $(\overline{B}, \overline{g})$ and $f : \overline{B} \rightarrow \mathbb{R}$ be a differentiable function. Let:
 \begin{eqnarray}\label{111.}
 \min\limits_{x_{0} \in N} f(x_{0})
  \end{eqnarray}
 be the constrained extremum problem.

\begin{theorem}\label{oth}
 \cite{TO} If $x \in N$ is the solution of the problem (\ref{111.}), then
\begin{enumerate}
\item[(1)] $(grad \hspace{0.1 cm} f)(x) \in T_{x}^{\perp}N$,
\item[(2)] the bilinear form $\Theta : T_{x}N \times T_{x}N \rightarrow \mathbb{R}$,
$$\Theta(E, F) = Hess_{f}(E, F) + \overline{g}(h^{'}(E, F), (grad \hspace{0.1 cm} f)(x))$$
\end{enumerate}
is positive semi-definite, where $h^{'}$ is the second fundamental form of $N$ in $\overline{B}$ and $grad \hspace{0.1 cm} f$ denotes the gradient of $f$.
\end{theorem}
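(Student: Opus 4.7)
The plan is to derive both conclusions from the first- and second-order necessary conditions for a local minimum, pulled back along arbitrary smooth curves in $N$ passing through $x$. The only geometric ingredient beyond elementary calculus is the Gauss formula: while a curve in $N$ has tangent vectors in $TN$, its ambient acceleration generally acquires a normal component equal to $h'(\dot\gamma,\dot\gamma)$, and this is exactly what produces the second fundamental form term appearing in $\Theta$.

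For part (1), fix $E\in T_{x}N$ and pick any smooth curve $\gamma:(-\epsilon,\epsilon)\to N$ with $\gamma(0)=x$ and $\dot\gamma(0)=E$ (for instance a geodesic of $(N,g)$ with the prescribed initial velocity). Since $x$ minimizes $f|_N$, the scalar function $t\mapsto f(\gamma(t))$ attains its minimum at $t=0$, so
\begin{eqnarray*}
0 = \frac{d}{dt}(f\circ\gamma)\Big|_{0} = \overline{g}\big((\mathrm{grad}\,f)(x),\, E\big).
\end{eqnarray*}
As $E\in T_{x}N$ is arbitrary, $(\mathrm{grad}\,f)(x)\in T_{x}^{\perp}N$.

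For part (2), differentiate once more along $\gamma$ using the ambient Levi-Civita connection $\overline{\nabla}^{\overline{g}}$, so that
\begin{eqnarray*}
\frac{d^{2}}{dt^{2}}(f\circ\gamma)\Big|_{0} = \mathrm{Hess}_{f}(E,E) + \overline{g}\big((\mathrm{grad}\,f)(x),\, \overline{\nabla}^{\overline{g}}_{\dot\gamma}\dot\gamma|_{0}\big).
\end{eqnarray*}
Decomposing via Gauss as $\overline{\nabla}^{\overline{g}}_{\dot\gamma}\dot\gamma = \nabla^{g}_{\dot\gamma}\dot\gamma + h'(\dot\gamma,\dot\gamma)$, the tangential piece pairs trivially with $(\mathrm{grad}\,f)(x)\in T_{x}^{\perp}N$ by part (1), and what survives is $\overline{g}(h'(E,E),(\mathrm{grad}\,f)(x))$. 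Thus $\Theta(E,E)$ equals the second derivative of $f\circ\gamma$ at $0$, which is non-negative because $0$ is a minimum; symmetry of both $\mathrm{Hess}_{f}$ and $h'$ makes $\Theta$ a symmetric bilinear form, hence positive semi-definite. The one mild subtlety is confirming that every $E\in T_{x}N$ is realized as the initial velocity of some admissible curve, which is immediate for any smooth submanifold; beyond that, everything reduces to standard first/second variation bookkeeping.
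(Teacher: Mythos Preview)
Your argument is correct and is the standard proof of this result: first-order optimality along curves gives the orthogonality of $(\mathrm{grad}\,f)(x)$, and the second derivative computation combined with the Gauss decomposition yields $\Theta(E,E)\geq 0$. Note, however, that the paper does not supply its own proof of this theorem; it is quoted from Oprea \cite{TO} and used as a black box later in the proof of Theorem~\ref{th}, so there is no in-paper argument to compare against.
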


We prove the following main theorem of the article:

\begin{theorem}\label{th}
Let $(\overline{B}(\overline{c}), \overline{\nabla}, \overline{g}, \phi, \xi)$ be a $(2s+1)-$dimensional Kenmotsu statistical manifold of constant $\phi-$sectional curvature $\overline{c}$ and $(N, \nabla, g)$ be an $(k+1)-$dimensional statistical submanifold in $\overline{B}(\overline{c})$. Then
\begin{enumerate}
\item[(1)] For each unit vector $\mathrm{E}_{1} \in T_{p}N$, $p \in N$,
\begin{eqnarray}\label{r}
Ric^{\nabla, \nabla^{\ast }}(\mathrm{E}_{1}) &\geq& 2Ric^{0}(\mathrm{E}_{1}) - \{\frac{3(\overline{c}+1)}{4}||\mathcal{P} \mathrm{E}_{1}||^{2} + \frac{k}{4}[(\overline{c}+1)(1-g^{2}(\mathrm{E}_{1},\xi))-4]\} \nonumber \\
&&- \frac{(k+1)^{2}}{8} [||\mathcal{H}||^{2} + ||\mathcal{H}^{\ast }||^{2}],
\end{eqnarray}
where $Ric^{0}$ denotes the Ricci curvature with respect to Levi-Civita connection.
\item[(2)] Moreover, the equality holds in the inequality (\ref{r}) if and only if $$2h(\mathrm{E}_{1}, \mathrm{E}_{1}) = (k+1) \mathcal{H}(p),$$ $$2h^{\ast}(\mathrm{E}_{1}, \mathrm{E}_{1}) = (k+1) \mathcal{H}^{\ast}(p)$$ and $$h(\mathrm{E}_{1}, \mathrm{F}_{1}) =0,$$ $$h^{\ast }(\mathrm{E}_{1}, \mathrm{F}_{1})=0,$$ for all $\mathrm{F}_{1} \in T_{p}N$ orthogonal to $\mathrm{E}_{1}$.
\end{enumerate}
\end{theorem}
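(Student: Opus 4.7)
The plan is to combine the Gauss equation for statistical submanifolds with Oprea's constrained optimization (Theorem \ref{oth}) applied to a suitably chosen function on $N$. First I would write down the Gauss-type identity relating the statistical curvature tensor $\overline S$ of $\overline B(\overline c)$ to the statistical curvature tensor $S$ of $N$ and the two second fundamental forms $h,h^{\ast}$; schematically,
\begin{equation*}
\overline g(\overline S(\mathrm E_1,\mathrm F_1)\mathrm G_1,\mathrm H_1)
=g(S(\mathrm E_1,\mathrm F_1)\mathrm G_1,\mathrm H_1)+\overline g\bigl(h(\mathrm E_1,\mathrm G_1),h^{\ast}(\mathrm F_1,\mathrm H_1)\bigr)-\overline g\bigl(h(\mathrm F_1,\mathrm G_1),h^{\ast}(\mathrm E_1,\mathrm H_1)\bigr).
\end{equation*}
On the ambient side, constant $\phi$-sectional curvature $\overline c$ forces $\overline S$ into an explicit form involving $\overline g$, $\phi$ and $\xi$, so the right side of this identity can be expanded completely. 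At the same time I would use Remark \ref{g} and \eqref{2.1} to rewrite the intrinsic part in terms of the Levi--Civita Ricci tensor $Ric^{0}$, which is exactly what appears in the bound \eqref{r}.

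Second, I would pick an orthonormal frame $\{\mathrm E_1,\dots,\mathrm E_{k+1}\}$ of $T_pN$ extending the given unit vector $\mathrm E_1$, and an orthonormal normal frame. Tracing the Gauss identity over $\mathrm F_1=\mathrm G_1=\mathrm E_i$ for $i=2,\dots,k+1$ yields $Ric^{\nabla,\nabla^{\ast}}(\mathrm E_1)$ on the left, while on the right it produces $2\,Ric^{0}(\mathrm E_1)$, the curvature contribution $\tfrac{3(\overline c+1)}{4}\|\mathcal P\mathrm E_1\|^{2}+\tfrac{k}{4}\bigl[(\overline c+1)(1-g^{2}(\mathrm E_1,\xi))-4\bigr]$ coming from the explicit Kenmotsu constant-curvature expression, and a bilinear remainder in the components $h_{ij}^{\alpha},h_{ij}^{\ast\alpha}$ of the two second fundamental forms.

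Third, to control the extrinsic remainder I would apply Theorem \ref{oth} to the function $f:\overline B\to\mathbb R$ whose restriction to $N$ encodes the relevant quadratic form in $h$ and $h^{\ast}$; typically one chooses $f=\tfrac12\|h\|^{2}+\tfrac12\|h^{\ast}\|^{2}$ or, equivalently, solves the constrained extremum problem \eqref{111.} for the trace $\sum_\alpha\sum_{i\ge 2} h_{11}^{\alpha}h_{ii}^{\alpha}$ (and the $h^{\ast}$ analogue) on the unit sphere of the space of normal-valued symmetric bilinear forms. The two conditions of Theorem \ref{oth} (gradient normal and Hessian semi-definite) translate, by a direct calculation of $\mathrm{Hess}_f$ in the induced coordinates, into the inequalities
\begin{equation*}
\sum_{\alpha}\sum_{i=2}^{k+1} h_{11}^{\alpha}h_{ii}^{\alpha}\le \frac{(k+1)^{2}}{8}\|\mathcal H\|^{2},\qquad \sum_{\alpha}\sum_{i=2}^{k+1} h_{11}^{\ast\alpha}h_{ii}^{\ast\alpha}\le \frac{(k+1)^{2}}{8}\|\mathcal H^{\ast}\|^{2},
\end{equation*}
and inserting these into the traced Gauss identity gives exactly \eqref{r}.

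Finally, for part (2), the equality in Theorem \ref{oth} corresponds to the bilinear form $\Theta$ vanishing identically on $T_pN$. Unwinding the Hessian conditions in the chosen frame, $\Theta\equiv 0$ is equivalent to $2h(\mathrm E_1,\mathrm E_1)=(k+1)\mathcal H(p)$, $2h^{\ast}(\mathrm E_1,\mathrm E_1)=(k+1)\mathcal H^{\ast}(p)$, and $h(\mathrm E_1,\mathrm F_1)=h^{\ast}(\mathrm E_1,\mathrm F_1)=0$ for every $\mathrm F_1\perp\mathrm E_1$, giving the asserted equality characterization. The expected main obstacle is the careful bookkeeping in step two: one must track the precise contributions of the terms coming from $\phi$, $\xi$, and the Kenmotsu structure (which in the constant $\phi$-sectional curvature model mixes $\overline c$ with additive constants arising from the Kenmotsu defining identities) so as to reproduce the exact coefficients $\tfrac{3(\overline c+1)}{4}$ and $\tfrac{k}{4}[(\overline c+1)(1-g^{2}(\mathrm E_1,\xi))-4]$ in \eqref{r}. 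The optimization step itself, once $f$ is correctly identified, is essentially routine.
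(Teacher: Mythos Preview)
Your outline is essentially the paper's own argument: Gauss identity for $\overline S$ and $S$, the polarization $2h^{0}=h+h^{\ast}$ to bring in the Levi--Civita second fundamental form, a separate Levi--Civita Gauss equation to produce $Ric^{0}$, then Oprea's optimization on the extrinsic remainder and a reading of the equality case.

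One concrete correction to your description of the optimization: Theorem~\ref{oth} is not applied to a function $f:\overline B\to\mathbb R$ such as $\tfrac12\|h\|^{2}+\tfrac12\|h^{\ast}\|^{2}$, nor is the constraint a unit sphere. In the paper (and in what actually makes the argument work) one fixes a normal index $r$ and maximizes the quadratic form
\[
\theta_{r}\bigl(h^{r}_{11},\dots,h^{r}_{(k+1)(k+1)}\bigr)=h^{r}_{11}\sum_{i=2}^{k+1}h^{r}_{ii}
\]
over the affine hyperplane $Q=\{\sum_{i=1}^{k+1}h^{r}_{ii}=(k+1)\mathcal H^{r}\}\subset\mathbb R^{k+1}$; since $Q$ is totally geodesic, the bilinear form $\Theta$ reduces to $\mathrm{Hess}_{\theta_r}$, which is negative semi-definite on $T_xQ$, and the unique critical point $h^{r}_{11}=\sum_{i\ge2}h^{r}_{ii}=\tfrac{(k+1)}{2}\mathcal H^{r}$ gives $\theta_{r}\le\tfrac{(k+1)^{2}}{4}(\mathcal H^{r})^{2}$. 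The coefficient $\tfrac{(k+1)^{2}}{8}$ in \eqref{r} comes from an additional factor $\tfrac12$ left over in the traced Gauss identity, not from the optimization bound itself.
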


\section{Preliminaries}

Let $(N, \nabla, g)$ and $(\overline{B}, \overline{\nabla}, \overline{g})$ be two statistical manifolds. An immersion $\iota : N \rightarrow \overline{B}$ is called a statistical immersion if $(\nabla, g)$ coincides with the induced statistical structure, that is, if
\begin{eqnarray}
(\nabla_{\mathrm{E}_{1}} g)(\mathrm{F}_{1}, \mathrm{G}_{1}) = (\nabla_{\mathrm{F}_{1}} g)(\mathrm{E}_{1}, \mathrm{G}_{1})
\end{eqnarray}
holds for any $\mathrm{E}_{1}, \mathrm{F}_{1}, \mathrm{G}_{1} \in \Gamma(T\overline{B})$. If a statistical immersion exists between two statistical manifolds, then we call $(N, \nabla, g)$ as a statistical submanifold in $(\overline{B}, \overline{\nabla}, \overline{g})$. Then the Gauss formulae are \cite{vos}
\begin{eqnarray}
\overline{\nabla}_{\mathrm{E}_{1}}\mathrm{F}_{1} &=& \nabla_{\mathrm{E}_{1}}\mathrm{F}_{1} + h(\mathrm{E}_{1}, \mathrm{F}_{1}),
\end{eqnarray}
and
\begin{eqnarray}
\overline{\nabla}_{\mathrm{E}_{1}}^{\ast}\mathrm{F}_{1} &=& \nabla_{\mathrm{E}_{1}}^{\ast}\mathrm{F}_{1} + h^{\ast}(\mathrm{E}_{1}, \mathrm{F}_{1}),
\end{eqnarray}
for any $\mathrm{E}_{1}, \mathrm{F}_{1} \in \Gamma(TN)$. We denote the dual connections on $\Gamma(TN^{\perp})$ by $D^{\perp}$ and $D^{\perp \ast}$. Then the corresponding Weingarten formulae are \cite{vos}:
\begin{eqnarray}
\overline{\nabla}_{\mathrm{E}_{1}}\mathcal{U} &=& - A_{\mathcal{U}}\mathrm{E}_{1} + D_{\mathrm{E}_{1}}^{\perp}\mathcal{U},
\end{eqnarray}
and
\begin{eqnarray}
\overline{\nabla}_{\mathrm{E}_{1}}^{\ast}\mathcal{U} &=& - A_{\mathcal{U}}^{\ast}\mathrm{E}_{1} + D_{\mathrm{E}_{1}}^{\perp \ast }\mathcal{U},
\end{eqnarray}
for any $\mathrm{E}_{1} \in \Gamma(TN)$ and $\mathcal{U} \in \Gamma(TN^{\perp})$. The symmetric and bilinear embedding curvature tensors of $N$ in $\overline{B}$ with respect to $\overline{\nabla}$ and $\overline{\nabla}^{\ast}$ are denoted by $h$ and $h^{\ast}$ respectively. The linear transformations $A_{\mathcal{U}}$ and $A_{\mathcal{U}}^{\ast}$ are given by \cite{vos}
\begin{eqnarray}
\overline{g}(h(\mathrm{E}_{1}, \mathrm{F}_{1}), \mathcal{U}) &=& g(A_{\mathcal{U}}^{\ast}\mathrm{E}_{1}, \mathrm{F}_{1}),
\end{eqnarray}
and
\begin{eqnarray}
\overline{g}(h^{\ast}(\mathrm{E}_{1}, \mathrm{F}_{1}), \mathcal{U}) &=& g(A_{\mathcal{U}} \mathrm{E}_{1}, \mathrm{F}_{1}),
\end{eqnarray}
for any $\mathrm{E}_{1}, \mathrm{F}_{1} \in \Gamma(TN)$ and $\mathcal{U} \in \Gamma(TN^{\perp})$.

We denote the Riemannian curvature tensor fields with respect to $\overline{\nabla}$ and $\overline{\nabla}^{\ast}$ by $\overline{R}$ and $\overline{R}^{\ast}$, respectively. Also, $R$ and $R^{\ast}$ are the the Riemannian curvature tensor fields with respect to the induced connections $\nabla$ and $\nabla^{\ast}$ of $\overline{\nabla}$ and $\overline{\nabla}^{\ast}$, respectively. Then the Gauss equations are \cite{vos}
\begin{eqnarray}\label{i111}
\overline{g}(\overline{R}(\mathrm{E}_{1}, \mathrm{F}_{1}) \mathrm{G}_{1}, \mathrm{H}_{1}) &=& g(R(\mathrm{E}_{1}, \mathrm{F}_{1}) \mathrm{G}_{1}, \mathrm{H}_{1}) + \overline{g}(h(\mathrm{E}_{1}, \mathrm{G}_{1}), h^{\ast}(\mathrm{F}_{1}, \mathrm{H}_{1})) \nonumber \\
&&- \overline{g}(h^{\ast}(\mathrm{E}_{1}, \mathrm{H}_{1}), h(\mathrm{F}_{1}, \mathrm{G}_{1})),
\end{eqnarray}
and
\begin{eqnarray}\label{i1}
\overline{g}(\overline{R}^{\ast}(\mathrm{E}_{1}, \mathrm{F}_{1}) \mathrm{G}_{1}, \mathrm{H}_{1}) &=& g(R^{\ast}(\mathrm{E}_{1}, \mathrm{F}_{1}) \mathrm{G}_{1}, \mathrm{H}_{1}) + \overline{g}(h^{\ast}(\mathrm{E}_{1}, \mathrm{G}_{1}), h(\mathrm{F}_{1}, \mathrm{H}_{1})) \nonumber \\
&&- \overline{g}(h(\mathrm{E}_{1}, \mathrm{H}_{1}), h^{\ast}(\mathrm{F}_{1}, \mathrm{G}_{1})),
\end{eqnarray}
for any $\mathrm{E}_{1}, \mathrm{F}_{1}, \mathrm{G}_{1}, \mathrm{H}_{1} \in \Gamma(TN)$. Also, we have
\begin{eqnarray}\label{r1}
2\overline{S} = \overline{R} + \overline{R}^{\ast},
\end{eqnarray}
and
\begin{eqnarray}\label{r2}
2S = R + R^{\ast},
\end{eqnarray}
where $S^{\nabla, \nabla^{\ast}} = S \in \Gamma(TN^{(1,3)})$ denotes the statistical curvature tensor field with respect to $\nabla$ and $\nabla^{\ast}$ of $(N, \nabla, g)$.

In general, one cannot define a sectional curvature with respect to the dual connections (which are not metric) by the standard definitions. However, B. Opozda \cite{article.82,article.822} defined a sectional curvature on a statistical manifold as follows:
\begin{eqnarray}\label{r3}
\overline{\mathcal{K}}(\mathrm{E}_{1} \wedge \mathrm{F}_{1}) &=& g(\overline{S}(\mathrm{E}_{1}, \mathrm{F}_{1})\mathrm{F}_{1}, \mathrm{E}_{1})\nonumber  \\
&=& \frac{1}{2}(g(\overline{R}(\mathrm{E}_{1}, \mathrm{F}_{1}) \mathrm{F}_{1}, \mathrm{E}_{1}) + g(\overline{R}^{\ast}(\mathrm{E}_{1}, \mathrm{F}_{1})\mathrm{F}_{1}, \mathrm{E}_{1})),
\end{eqnarray}
for any orthonormal vectors $\mathrm{E}_{1}, \mathrm{F}_{1} \in \Gamma(T\overline{B})$.

Suppose that dim$(N) = k$ and dim$(\overline{B}) = s$. Given a local orthonormal frame $\{e_{1}, \dots, e_{k}\}$ of $T_{p}N$ and $\{e_{k+1}, \dots, e_{s}\}$ of $T_{p}N^{\perp}$, $p \in N$. Then the mean curvature vectors $\mathcal{H}$ and $\mathcal{H}^{\ast }$ are
\begin{eqnarray*}
k\mathcal{H} &=& \sum_{i = 1}^{k} h(e_{i}, e_{i}),\\
k\mathcal{H}^{\ast } &=& \sum_{i = 1}^{k}h^{\ast }(e_{i}, e_{i}).
\end{eqnarray*}
From Remark \ref{g}, it can be easily verified that
\begin{eqnarray*}
2h^{0} = h + h^{\ast},
\end{eqnarray*}
and hence
\begin{eqnarray*}
2\mathcal{H}^{0} = \mathcal{H} + \mathcal{H}^{\ast},
\end{eqnarray*}
where $h^{0}$ and $\mathcal{H}^{0}$ are the second fundamental form and the mean curvature field with respect to $\overline{\nabla}^{\overline{g}}$ respectively. We set
\begin{eqnarray*}
h_{ij}^{r} &=& \overline{g}(h(e_{i}, e_{j}), e_{r}),\\
h_{ij}^{\ast r} &=& \overline{g}(h^{\ast }(e_{i}, e_{j}), e_{r}),
\end{eqnarray*}
for $i, j = \{1, \dots, m\}$ and $r = \{k+1, \dots, n\}$. Then the squared norm of mean curvature vectors are
\begin{eqnarray*}
k^{2}||\mathcal{H}||^{2} &=& \sum_{r = k+1}^{s}\bigg(\sum_{i = 1}^{k}h_{ii}^{r}\bigg)^{2}, \\
k^{2}||\mathcal{H}^{\ast }||^{2} &=& \sum_{r = k+1}^{s} \bigg(\sum_{i = 1}^{k}h_{ii}^{\ast  r}\bigg)^{2}.
\end{eqnarray*}

Kenmotsu geometry is a branch of differential geometry with nice applications in mechanics of dynamical systems with time dependent Hamiltonian, geometrical optics, thermodynamics and geometric quantization. Also, the study of submanifolds in Kenmotsu ambient spaces is a valuable subject in Kenmotsu geometry, which has been analysed by many geometers.

\begin{definition}
\cite{f2} Let $(\overline{B}, \overline{\nabla}, \overline{g}, \phi, \xi)$ be a Kenmotsu manifold. A quadruplet $(\overline{B}, \overline{\nabla} = \overline{\nabla}^{\overline{g}} + \overline{\mathbb{K}}, \overline{g}, \phi, \xi)$ is called a Kenmotsu statistical manifold if $(\overline{\nabla}, \overline{g})$ is a statistical structure on $\overline{B}$ and the formula
\begin{eqnarray}
\overline{\mathbb{K}}(\mathrm{E}_{1}, \phi \mathrm{F}_{1}) = - \phi \overline{\mathbb{K}}(\mathrm{E}_{1}, \mathrm{F}_{1})
\end{eqnarray}
holds for any $\mathrm{E}_{1}, \mathrm{F}_{1} \in \Gamma(T\overline{B})$. Here we call $(\overline{\nabla}, \overline{g}, \phi, \xi)$ a Kenmotsu statistical structure on $\overline{B}$.
\end{definition}

A Kenmotsu statistical manifold $(\overline{B}, \overline{\nabla}, \overline{g}, \phi, \xi)$ is said to be of constant $\phi-$sectional curvature $\overline{c} \in \mathbb{R}$ if \cite{f2}
\begin{eqnarray}\label{d}
\overline{R}(\mathrm{E}_{1}, \mathrm{F}_{1})\mathrm{G}_{1} &=& \frac{\overline{c}-3}{4} \{\overline{g}(\mathrm{F}_{1}, \mathrm{G}_{1})\mathrm{E}_{1} - \overline{g}(\mathrm{E}_{1}, \mathrm{G}_{1})\mathrm{F}_{1}\} \nonumber \\
&&+ \frac{\overline{c}+1}{4} \{\overline{g}(\phi \mathrm{F}_{1}, \mathrm{G}_{1})\phi \mathrm{E}_{1} - \overline{g}(\phi \mathrm{E}_{1}, \mathrm{G}_{1})\phi \mathrm{F}_{1}  \nonumber \\
&&- 2\overline{g}(\phi \mathrm{E}_{1}, \mathrm{F}_{1})\phi \mathrm{G}_{1} - \overline{g}(\mathrm{F}_{1}, \xi)\overline{g}(\mathrm{G}_{1}, \xi)\mathrm{E}_{1}  \nonumber \\
&&+ \overline{g}(\mathrm{E}_{1}, \xi)\overline{g}(\mathrm{G}_{1}, \xi)\mathrm{F}_{1} + \overline{g}(\mathrm{F}_{1}, \xi)\overline{g}(\mathrm{G}_{1}, \mathrm{E}_{1})\xi \nonumber \\
&&- \overline{g}(\mathrm{E}_{1}, \xi)\overline{g}(\mathrm{G}_{1}, \mathrm{F}_{1})\xi \},
\end{eqnarray}
for $\mathrm{E}_{1}, \mathrm{F}_{1}, \mathrm{G}_{1} \in \Gamma(T\overline{B})$. We denote it by $\overline{B}(\overline{c})$.

Let $(N, \nabla, g)$ be a statistical submanifold in a Kenmotsu statistical manifold $(\overline{B}, \overline{\nabla}, \overline{g}, \phi, \xi)$. Then, the corresponding Gauss equation is given by (for details see \cite{f})
\begin{eqnarray}\label{i}
R(\mathrm{E}_{1}, \mathrm{F}_{1})\mathrm{G}_{1} &=& \frac{\overline{c}-3}{4} \{g(\mathrm{F}_{1}, \mathrm{G}_{1})\mathrm{E}_{1} - g(\mathrm{E}_{1}, \mathrm{G}_{1})\mathrm{F}_{1}\} \nonumber \\
&&+ \frac{\overline{c}+1}{4} \{g(\phi \mathrm{F}_{1}, \mathrm{G}_{1})\phi \mathrm{E}_{1} - g(\phi \mathrm{E}_{1}, \mathrm{G}_{1})\phi \mathrm{F}_{1}  \nonumber \\
&&- 2g(\phi \mathrm{E}_{1}, \mathrm{F}_{1})\phi \mathrm{G}_{1} - g(\mathrm{F}_{1}, \xi)g(\mathrm{G}_{1}, \xi)\mathrm{E}_{1}  \nonumber \\
&&+ g(\mathrm{E}_{1}, \xi)g(\mathrm{G}_{1}, \xi)\mathrm{F}_{1} + g(\mathrm{F}_{1}, \xi)g(\mathrm{G}_{1}, \mathrm{E}_{1})\xi \nonumber \\
&&- g(\mathrm{E}_{1}, \xi)g(\mathrm{G}_{1}, \mathrm{F}_{1})\xi \} \nonumber \\
&&+ \frac{1}{2}\{ \mathrm{A}_{h(\mathrm{F}_{1}, \mathrm{G}_{1})}\mathrm{E}_{1} + \mathrm{A}_{h^{\ast}(\mathrm{F}_{1}, \mathrm{G}_{1})}^{\ast}\mathrm{E}_{1} \}\nonumber \\
&&- \frac{1}{2}\{ \mathrm{A}_{h(\mathrm{E}_{1}, \mathrm{G}_{1})}\mathrm{F}_{1} + \mathrm{A}_{h^{\ast}(\mathrm{E}_{1}, \mathrm{G}_{1})}^{\ast}\mathrm{F}_{1}\},
\end{eqnarray}
for any $\mathrm{E}_{1}, \mathrm{F}_{1}, \mathrm{G}_{1} \in \Gamma(TN)$.

Any $\mathrm{E}_{1} \in \Gamma(TN)$ can be decomposed uniquely into its tangent and normal parts $P\mathrm{E}_{1}$ and $C\mathrm{E}_{1}$ respectively,
\begin{eqnarray*}
\phi \mathrm{E}_{1} = P\mathrm{E}_{1} + C\mathrm{E}_{1}.
\end{eqnarray*}
The squared norm of $P$ is defined by
\begin{eqnarray*}
||P||^{2} = \sum_{i,j=1}^{k+1} g^{2}(Pe_{i}, e_{j}),
\end{eqnarray*}
where $\{e_{1}, \dots, e_{k+1}\}$ denotes a local orthonormal frame of $T_{p}N$.

A statistical submanifold $(N, \nabla, g)$ in a Kenmotsu statistical manifold $(\overline{B}, \overline{\nabla}, \overline{g}, \phi, \xi)$ is said to be invariant, $C=0$, (respectively, anti-invariant, $P=0$) if $\phi \mathrm{E}_{1} \in \Gamma(TN)$ for any $\mathrm{E}_{1} \in \Gamma(TN)$ (respectively, $\phi \mathrm{E}_{1} \in \Gamma(TN^{\perp})$ for any $\mathrm{E}_{1} \in \Gamma(TN)$).

\section{Statistical Curvature Properties}

Before going to prove the results, first we recall the following results of \cite{f2}:

\begin{proposition}\label{p}
\cite{f2} Let $(\tilde{B}, \tilde{g}, J)$ be an almost Hermitian manifold.
Set $\overline{B} = \tilde{B} \times \mathbb{R}$, $\overline{g} = e^{2\alpha}g + (d\alpha)^{2}$,
$\xi = \frac{\partial}{\partial \alpha} \in \Gamma(T\overline{B})$ and define
$\phi \in \Gamma(T\overline{B}^{(1,1)})$ by $\phi \mathrm{E}_{2} = J\mathrm{E}_{2}$ for any $\mathrm{E}_{2} \in \Gamma(T\tilde{B})$ and $\phi \xi = 0$. Then,
\begin{enumerate}
\item[(1)] The triple $(\overline{g}, \phi, \xi)$ is an almost contact metric structure on $\overline{B}$.
\item[(2)] The pair $(\tilde{g}, J)$ is a \textrm{K$\ddot{a}$hler} structure on $\tilde{B}$ if and only if the triple $(\overline{g}, \phi, \xi)$ is a Kenmotsu structure on $\overline{B}$.
\end{enumerate}
\end{proposition}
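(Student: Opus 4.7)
The plan is to prove both parts by direct verification, exploiting the orthogonal decomposition $T\overline{B} = T\tilde{B} \oplus \mathbb{R}\xi$ and the Koszul formulas for the Levi-Civita connection of a warped product metric. Throughout, I will decompose a generic tangent vector as $E = E^{\top} + \eta(E)\xi$ with $E^{\top} \in T\tilde{B}$, where $\eta := \overline{g}(\xi, \cdot)$.

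For part (1), I would check each of the almost contact metric axioms in turn. The identities $\eta(\xi) = 1$ and $\phi\xi = 0$ hold by construction. The relation $\phi^{2} = -I + \eta \otimes \xi$ is verified by testing on $T\tilde{B}$, where it reduces to $J^{2} = -I$ (true since $J$ is almost complex), and on $\xi$, where both sides vanish. Finally, for the compatibility condition
\begin{equation*}
\overline{g}(\phi E, \phi F) = \overline{g}(E, F) - \eta(E)\eta(F),
\end{equation*}
I would again split into cases: for $E, F \in \Gamma(T\tilde{B})$, the left-hand side equals $e^{2\alpha}\tilde{g}(JE, JF) = e^{2\alpha}\tilde{g}(E, F) = \overline{g}(E, F)$ by Hermiticity of $(\tilde{g}, J)$, and both $\eta(E)$ and $\eta(F)$ vanish; the cases involving $\xi$ are immediate from $\phi\xi = 0$, $\overline{g}(\xi,\xi) = 1$, and the $\overline{g}$-orthogonality of the two factors.

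For part (2), I would use the standard characterization that $(\overline{g}, \phi, \xi)$ is a Kenmotsu structure iff
\begin{equation*}
(\overline{\nabla}^{\overline{g}}_{E}\phi)F = \overline{g}(\phi E, F)\xi - \eta(F)\phi E
\end{equation*}
for all $E, F \in \Gamma(T\overline{B})$. The key inputs are O'Neill's warped product identities, which in the present normalization $f = e^{\alpha}$ give, for $X, Y \in \Gamma(T\tilde{B})$ lifted to $\overline{B}$,
\begin{equation*}
\overline{\nabla}^{\overline{g}}_{X} Y = \tilde{\nabla}^{\tilde{g}}_{X} Y - \overline{g}(X, Y)\xi, \qquad \overline{\nabla}^{\overline{g}}_{X}\xi = \overline{\nabla}^{\overline{g}}_{\xi} X = X, \qquad \overline{\nabla}^{\overline{g}}_{\xi}\xi = 0.
\end{equation*}
From these, $\overline{\nabla}^{\overline{g}}_{E}\xi = E - \eta(E)\xi$ for every $E$, which is the derivative of $\xi$ forced by Kenmotsu. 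I would then verify the full $\phi$-identity by running through the four cases determined by whether each of $E, F$ lies in $T\tilde{B}$ or along $\xi$; three of them collapse immediately using $\phi\xi = 0$ together with the formulas above.

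The decisive case is $E = X$, $F = Y$ with $X, Y \in \Gamma(T\tilde{B})$. A direct expansion using the connection formulas yields
\begin{equation*}
(\overline{\nabla}^{\overline{g}}_{X}\phi)Y = \tilde{\nabla}^{\tilde{g}}_{X}(JY) - \overline{g}(X, JY)\xi - J(\tilde{\nabla}^{\tilde{g}}_{X}Y) = (\tilde{\nabla}^{\tilde{g}}_{X} J)Y - \overline{g}(X, JY)\xi,
\end{equation*}
where the term $-\overline{g}(X,Y)\phi\xi$ drops out because $\phi\xi = 0$. The Kenmotsu right-hand side, since $\eta(Y) = 0$ and $\tilde{g}(JX, Y) = -\tilde{g}(X, JY)$, reduces to $-\overline{g}(X, JY)\xi$. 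Subtracting, the Kenmotsu condition on $T\tilde{B}$ is equivalent to $(\tilde{\nabla}^{\tilde{g}}_{X} J)Y = 0$ for all $X, Y$, i.e., to $(\tilde{B}, \tilde{g}, J)$ being K\"ahler, proving both directions of the equivalence. The main obstacle is purely bookkeeping: with two metrics, two connections, two operators $J$ and $\phi$, and the warping factor $e^{2\alpha}$ floating around, one must track signs and scalings so that the warped-product correction $-\overline{g}(X, JY)\xi$ matches the Kenmotsu right-hand side exactly; this matching is the sole place where the Hermitian skew-symmetry of $J$ with respect to $\tilde{g}$ enters.
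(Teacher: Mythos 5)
Your argument is correct. Note that the paper itself gives no proof of this proposition --- it is recalled verbatim from Furuhata--Hasegawa--Okuyama--Sato \cite{f2} --- so there is no in-paper argument to compare against; your direct verification is the standard one and fills that gap. Part (1) is a complete case check of the almost contact metric axioms on the splitting $T\overline{B}=T\tilde{B}\oplus\mathbb{R}\xi$. In part (2) your O'Neill formulas for the warping function $f=e^{\alpha}$ (so that $\xi f/f=1$ and $\operatorname{grad}f/f=\xi$) are the right normalization, the three degenerate cases do collapse as claimed (the case $E=X$, $F=\xi$ gives $-JX$ on both sides rather than $0$, but still matches), and the decisive identity $(\overline{\nabla}^{\overline{g}}_{X}\phi)Y=(\tilde{\nabla}^{\tilde{g}}_{X}J)Y-\overline{g}(X,JY)\xi$ together with $\overline{g}(\phi X,Y)=-\overline{g}(X,JY)$ reduces the Kenmotsu condition exactly to $\tilde{\nabla}^{\tilde{g}}J=0$, i.e.\ to the K\"ahler condition, giving both directions of the equivalence. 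The only cosmetic remark is that tensoriality of $\overline{\nabla}^{\overline{g}}\phi$ in both arguments should be invoked to justify that checking on lifts and on $\xi$ suffices, which you implicitly do.
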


\begin{theorem}\label{p1}
\cite{f2} Let $(\tilde{B}, \tilde{g}, J)$ be a \textrm{K$\ddot{a}$hler} manifold, $(\overline{B} = \tilde{B} \times \mathbb{R}, \overline{g}, \phi, \xi)$ the Kenmotsu manifold as in  Proposition \ref{p}, and $(\overline{\nabla} = \nabla^{\overline{g}} + \overline{\mathbb{K}}, \overline{g})$
a statistical structure on $\overline{B}$. Define $\mathcal{A} \in \Gamma(T\overline{B}^{(0,2)} \otimes T\tilde{B})$, $\Theta \in \Gamma(T\overline{B}^{(0,2)})$ and $\mathbb{K} \in \Gamma(T\tilde{B}^{(1,2)})$ by
\begin{eqnarray*}
\overline{\mathbb{K}}(\mathrm{E}_{1}, \mathrm{F}_{1}) = \mathcal{A}(\mathrm{E}_{1}, \mathrm{F}_{1}) + \Theta(\mathrm{E}_{1}, \mathrm{F}_{1})\xi, \hspace{0.5 cm} \textrm{and} \hspace{0.5 cm} \mathbb{K}(\mathrm{E}_{2}, \mathrm{F}_{2}) = \mathcal{A}(\mathrm{E}_{2}, \mathrm{F}_{2}),
\end{eqnarray*}
for $\mathrm{E}_{1}, \mathrm{F}_{1} \in \Gamma(T\overline{B})$ and $\mathrm{E}_{2}, \mathrm{F}_{2} \in \Gamma(T\tilde{B})$. Then the following conditions are equivalent:
\begin{enumerate}
\item[(1)] $(\overline{\nabla}, \overline{g}, \phi, \xi)$ is a Kenmotsu statistical structure on $\overline{B}$.
\item[(2)] $(\tilde{\nabla} = \nabla^{\tilde{g}} + \mathbb{K}, \tilde{g}, J)$ is a holomorphic statistical structure on $N$,
and the formulae $\mathcal{A}(\mathrm{E}_{1}, \xi) = 0$, $\Theta(\mathrm{E}_{1}, \mathrm{F}_{2}) = 0$ hold for $\mathrm{E}_{1} \in \Gamma(T\overline{B})$ and $\mathrm{F}_{2} \in \Gamma(T\tilde{B})$.
\end{enumerate}
\end{theorem}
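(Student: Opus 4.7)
The plan is to express the statistical Ricci $Ric^{\nabla,\nabla^{\ast}}(\mathrm{E}_1)$ through the Gauss equations for $\nabla$ and $\nabla^{\ast}$, substitute the constant $\phi$-sectional curvature formula (\ref{d}), extract a Levi-Civita Ricci piece using the identity $2h^{0}=h+h^{\ast}$, and bound the remaining quadratic in $h,h^{\ast}$ by the AM-GM argument that is the content of Oprea's Theorem \ref{oth}.

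Fix $p\in N$, set $e_1=\mathrm{E}_1$, and extend to orthonormal frames $\{e_1,\ldots,e_{k+1}\}$ of $T_pN$ and $\{e_{k+2},\ldots,e_{2s+1}\}$ of $T_pN^{\perp}$. From (\ref{r2}) the statistical Ricci satisfies
\begin{equation*}
2\,Ric^{\nabla,\nabla^{\ast}}(\mathrm{E}_1)=\sum_{j=2}^{k+1}\bigl[g(R(\mathrm{E}_1,e_j)e_j,\mathrm{E}_1)+g(R^{\ast}(\mathrm{E}_1,e_j)e_j,\mathrm{E}_1)\bigr],
\end{equation*}
and summing the Gauss equations (\ref{i111}), (\ref{i1}) over $j$ with arguments $(\mathrm{E}_1,e_j,e_j,\mathrm{E}_1)$ recasts the right-hand side as $\sum_j\overline{g}\bigl((\overline{R}+\overline{R}^{\ast})(\mathrm{E}_1,e_j)e_j,\mathrm{E}_1\bigr)+\mathcal{Q}(h,h^{\ast})$, where
\begin{equation*}
\mathcal{Q}(h,h^{\ast})=(k+1)\bigl[\overline{g}(h(\mathrm{E}_1,\mathrm{E}_1),\mathcal{H}^{\ast})+\overline{g}(h^{\ast}(\mathrm{E}_1,\mathrm{E}_1),\mathcal{H})\bigr]-2\sum_{j=1}^{k+1}\overline{g}\bigl(h(\mathrm{E}_1,e_j),h^{\ast}(\mathrm{E}_1,e_j)\bigr).
\end{equation*}

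The ambient curvature contribution is evaluated by contracting (\ref{d}) (symmetrised to $2\overline{S}$ via (\ref{r1})) against the chosen frame. Using $\sum_{j=1}^{k+1}g^{2}(\phi e_j,\mathrm{E}_1)=\Vert P\mathrm{E}_1\Vert^{2}$, $g(\phi\mathrm{E}_1,\mathrm{E}_1)=0$, and the $\xi$-contractions built into (\ref{d}) under the natural convention $\xi\in\Gamma(TN)$, its seven terms coalesce into the bracketed curvature expression $\tfrac{3(\overline{c}+1)}{4}\Vert P\mathrm{E}_1\Vert^{2}+\tfrac{k}{4}\bigl[(\overline{c}+1)(1-g^{2}(\mathrm{E}_1,\xi))-4\bigr]$. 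Separately, I decompose $\mathcal{Q}(h,h^{\ast})$ using $h^{0}=(h+h^{\ast})/2$: the polarisation identity $4\overline{g}(h^{0},h^{0})=\overline{g}(h,h)+\overline{g}(h^{\ast},h^{\ast})+2\overline{g}(h,h^{\ast})$ lets the mixed $\overline{g}(h,h^{\ast})$-piece assemble with ambient residues to reproduce the Levi-Civita Gauss expression $\sum_j[\overline{g}(h^{0}(\mathrm{E}_1,\mathrm{E}_1),h^{0}(e_j,e_j))-\Vert h^{0}(\mathrm{E}_1,e_j)\Vert^{2}]=Ric^{0}(\mathrm{E}_1)-\sum_j\overline{R}^{\overline{g}}(\mathrm{E}_1,e_j,e_j,\mathrm{E}_1)$; the $[\overline{\mathbb{K}},\overline{\mathbb{K}}]$-discrepancy in (\ref{2.1}) cancels so that $2\,Ric^{0}(\mathrm{E}_1)$ emerges on the right-hand side of (\ref{r}), the factor $2$ matching the factor $2$ on the left of the displayed identity above.

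What remains is a quadratic form in $h_{ij}^{r},h_{ij}^{\ast r}$ whose diagonal part, for each normal index $r$, has shape $(k+1)\mathcal{H}^{r}h_{11}^{r}-(h_{11}^{r})^{2}$ together with the starred analogue, plus non-negative off-diagonal contributions $\sum_{j\ge2}(h_{1j}^{r})^{2}$, $\sum_{j\ge2}(h_{1j}^{\ast r})^{2}$ that may be dropped. Applying $4ab\le(a+b)^{2}$ with $a=h_{11}^{r}$, $b=(k+1)\mathcal{H}^{r}-h_{11}^{r}$ — equivalently, invoking Theorem \ref{oth} for $f(x_1,\ldots,x_{k+1})=x_1^{2}-(k+1)\mathcal{H}^{r}x_1$ restricted to $\sum_i x_i=(k+1)\mathcal{H}^{r}$, whose critical point $2x_1=(k+1)\mathcal{H}^{r}$ is located by $(\mathrm{grad}\,f)(x)\in T_x^{\perp}N$ and certified minimal by positive semi-definiteness of $\Theta$ — bounds each diagonal contribution by $\tfrac{(k+1)^{2}}{4}(\mathcal{H}^{r})^{2}$. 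Summed over $r$ and over the pair $(h,h^{\ast})$ and absorbed against the factor $\tfrac{1}{2}$ introduced by (\ref{r2}), this produces the $\tfrac{(k+1)^{2}}{8}[\Vert\mathcal{H}\Vert^{2}+\Vert\mathcal{H}^{\ast}\Vert^{2}]$ term in (\ref{r}). Equality forces the AM-GM step to be tight ($2h(\mathrm{E}_1,\mathrm{E}_1)=(k+1)\mathcal{H}(p)$ and $2h^{\ast}(\mathrm{E}_1,\mathrm{E}_1)=(k+1)\mathcal{H}^{\ast}(p)$) and the discarded off-diagonal squares to vanish ($h(\mathrm{E}_1,\mathrm{F}_1)=h^{\ast}(\mathrm{E}_1,\mathrm{F}_1)=0$ for every $\mathrm{F}_1\in T_pN$ orthogonal to $\mathrm{E}_1$). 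The main obstacle throughout is the bookkeeping just described: separating the cross-term $\overline{g}(h,h^{\ast})$ in $\mathcal{Q}$ into a clean $2\,Ric^{0}$-piece plus a mean-curvature-controllable remainder, and verifying that the $[\overline{\mathbb{K}},\overline{\mathbb{K}}]$-discrepancy in (\ref{2.1}) cancels between $\overline{S}$ and $\overline{R}^{\overline{g}}$ so that only the stated ambient combination survives.
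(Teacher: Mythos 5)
Your proposal does not address the statement it is supposed to prove. The statement in question is Theorem \ref{p1}: the equivalence between (1) $(\overline{\nabla}, \overline{g}, \phi, \xi)$ being a Kenmotsu statistical structure on the product $\overline{B} = \tilde{B} \times \mathbb{R}$ and (2) $(\tilde{\nabla} = \nabla^{\tilde{g}} + \mathbb{K}, \tilde{g}, J)$ being a holomorphic statistical structure on $\tilde{B}$ together with the vanishing conditions $\mathcal{A}(\mathrm{E}_{1}, \xi) = 0$ and $\Theta(\mathrm{E}_{1}, \mathrm{F}_{2}) = 0$. A proof of that equivalence would have to start from the defining condition $\overline{\mathbb{K}}(\mathrm{E}_{1}, \phi \mathrm{F}_{1}) = -\phi\,\overline{\mathbb{K}}(\mathrm{E}_{1}, \mathrm{F}_{1})$ of a Kenmotsu statistical structure, decompose $\overline{\mathbb{K}}$ into its $T\tilde{B}$-component $\mathcal{A}$ and its $\xi$-component $\Theta\,\xi$, use $\phi\xi = 0$ and $\phi|_{T\tilde{B}} = J$ to translate that condition into $\mathbb{K}(\mathrm{E}_{2}, J\mathrm{F}_{2}) = -J\,\mathbb{K}(\mathrm{E}_{2}, \mathrm{F}_{2})$ on $\tilde{B}$ (the defining property of a holomorphic statistical structure) plus the constraints on $\mathcal{A}(\cdot,\xi)$ and $\Theta(\cdot,\cdot)$, and verify that the torsion-freeness and Codazzi conditions for $\overline{\nabla}$ descend to those for $\tilde{\nabla}$ with respect to the warped metric $\overline{g} = e^{2\alpha}\tilde{g} + (d\alpha)^{2}$. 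None of this appears in your write-up.

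What you have written instead is an outline of the proof of Theorem \ref{th}, the Chen--Ricci inequality for statistical submanifolds: Gauss equations, contraction of the constant $\phi$-sectional curvature formula, the identity $2h^{0} = h + h^{\ast}$, and the Oprea-style quadratic optimization bounding $\theta_{r}$ by $\frac{(k+1)^{2}}{4}(\mathcal{H}^{r})^{2}$. That argument, whatever its merits as a proof of the inequality, is simply not a proof of Theorem \ref{p1}; the two results have no overlap in either hypotheses or conclusions. (Note also that the paper itself does not reprove Theorem \ref{p1}; it cites it from Furuhata et al.) You need to restart from the correct statement.
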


\begin{proposition}\label{p2}
\cite{f2} Let $(\tilde{B}, \tilde{\nabla} = \nabla^{\tilde{g}} + \mathbb{K}, \tilde{g}, J)$ be a holomorphic statistical manifold, and
$(\overline{B} = \tilde{B} \times \mathbb{R}, \overline{g}, \phi, \xi)$ the Kenmotsu manifold as in  Proposition \ref{p}.
For any $\beta \in C^{\infty}(\overline{B})$, define $\overline{\mathbb{K}} \in \Gamma(T\overline{B}^{(1,2)})$ by
\begin{eqnarray*}
\overline{\mathbb{K}}(\mathrm{E}_{2}, \mathrm{F}_{2}) = \mathbb{K}(\mathrm{E}_{2}, \mathrm{F}_{2}), \hspace{0.5 cm} \overline{\mathbb{K}}(\mathrm{E}_{2}, \xi) = \overline{\mathbb{K}}(\xi, \mathrm{E}_{2}) = 0, \hspace{0.5 cm} \textrm{and} \hspace{0.5 cm} \overline{\mathbb{K}}(\xi, \xi) = \beta \xi,
\end{eqnarray*}
for any $\mathrm{E}_{2}, \mathrm{F}_{2} \in \Gamma(T\tilde{B})$. Then $(\overline{\nabla} = \nabla^{\overline{g}} + \overline{\mathbb{K}}, \overline{g}, \phi, \xi)$ is a Kenmotsu statistical structure on $\overline{B}$.
\end{proposition}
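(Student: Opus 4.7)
The plan is to reduce the proposition to Theorem \ref{p1} by exhibiting the constructed $\overline{\mathbb{K}}$ in the $(\mathcal{A},\Theta)$-form that appears there and then checking the two side conditions listed in item (2) of that theorem. Because Theorem \ref{p1} takes $(\overline{\nabla} = \nabla^{\overline{g}} + \overline{\mathbb{K}}, \overline{g})$ being a statistical structure as hypothesis, the plan also includes a preliminary verification that the extended $\overline{\mathbb{K}}$ is symmetric and $\overline{g}$-self-adjoint.

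First I would use the product decomposition $T\overline{B} = T\tilde{B} \oplus \mathbb{R}\xi$ to write $\mathrm{E}_{1} = \mathrm{E}_{1}^{\top} + \overline{g}(\mathrm{E}_{1},\xi)\xi$ with $\mathrm{E}_{1}^{\top} \in \Gamma(T\tilde{B})$, and likewise for $\mathrm{F}_{1},\mathrm{G}_{1}$. Extending the three defining rules of the proposition $C^{\infty}(\overline{B})$-bilinearly collapses $\overline{\mathbb{K}}$ into the single closed form
\[
\overline{\mathbb{K}}(\mathrm{E}_{1},\mathrm{F}_{1}) \;=\; \mathbb{K}(\mathrm{E}_{1}^{\top}, \mathrm{F}_{1}^{\top}) \;+\; \beta\,\overline{g}(\mathrm{E}_{1},\xi)\,\overline{g}(\mathrm{F}_{1},\xi)\,\xi .
\]
Symmetry in $(\mathrm{E}_{1},\mathrm{F}_{1})$ is then inherited from the symmetry of $\mathbb{K}$ on $\tilde{B}$, and $\overline{g}$-self-adjointness follows from the $\tilde{g}$-self-adjointness of $\mathbb{K}$ in the horizontal block, together with the manifestly $(\mathrm{E}_{1},\mathrm{F}_{1},\mathrm{G}_{1})$-symmetric scalar $\beta\,\overline{g}(\mathrm{E}_{1},\xi)\overline{g}(\mathrm{F}_{1},\xi)\overline{g}(\mathrm{G}_{1},\xi)$ in the vertical block; all mixed blocks vanish identically, so no cross-terms survive. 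This certifies that $(\overline{\nabla},\overline{g})$ is a statistical structure on $\overline{B}$.

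Next, I would read off
\[
\mathcal{A}(\mathrm{E}_{1},\mathrm{F}_{1}) := \mathbb{K}(\mathrm{E}_{1}^{\top},\mathrm{F}_{1}^{\top}), \qquad \Theta(\mathrm{E}_{1},\mathrm{F}_{1}) := \beta\,\overline{g}(\mathrm{E}_{1},\xi)\,\overline{g}(\mathrm{F}_{1},\xi),
\]
so that $\overline{\mathbb{K}} = \mathcal{A} + \Theta\,\xi$ matches the ansatz of Theorem \ref{p1}. The two side conditions of that theorem's item (2) are immediate: $\mathcal{A}(\mathrm{E}_{1},\xi) = \mathbb{K}(\mathrm{E}_{1}^{\top}, 0) = 0$ because $\xi^{\top} = 0$, and $\Theta(\mathrm{E}_{1},\mathrm{F}_{2}) = 0$ for $\mathrm{F}_{2}\in\Gamma(T\tilde{B})$ because $\overline{g}(\mathrm{F}_{2},\xi)=0$. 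Since by hypothesis $(\tilde{\nabla}=\nabla^{\tilde{g}}+\mathbb{K},\tilde{g},J)$ is a holomorphic statistical structure, the full content of Theorem \ref{p1}(2) holds, and the equivalence provided by that theorem yields Theorem \ref{p1}(1), namely that $(\overline{\nabla}=\nabla^{\overline{g}}+\overline{\mathbb{K}},\overline{g},\phi,\xi)$ is a Kenmotsu statistical structure on $\overline{B}$.

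The only genuine bookkeeping obstacle is ensuring that $\mathbb{K}(\mathrm{E}_{1}^{\top},\mathrm{F}_{1}^{\top})$, originally a tensor on $\tilde{B}$, is well defined as a section of $T\overline{B}^{(1,2)}$ on the product $\tilde{B}\times\mathbb{R}$, and that allowing $\beta$ to depend on the $\mathbb{R}$-factor does not spoil the statistical axioms. The first point is harmless because the horizontal distribution is integrable with leaves $\tilde{B}\times\{t\}$ along which $\mathbb{K}$ is transported unchanged; the second is harmless because $\beta$ enters only as an undifferentiated multiplier of the fully symmetric trilinear form $\overline{g}(\mathrm{E}_{1},\xi)\overline{g}(\mathrm{F}_{1},\xi)\overline{g}(\mathrm{G}_{1},\xi)$, so derivatives of $\beta$ never appear in the symmetry or self-adjointness checks. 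With these two points settled, the reduction to Theorem \ref{p1} goes through and the proof is complete.
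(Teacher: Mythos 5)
Your argument is correct. Note that the paper itself offers no proof of Proposition \ref{p2} — it is recalled verbatim from \cite{f2} as background — so there is nothing internal to compare against; but your route (first checking that the extended $\overline{\mathbb{K}}$ is symmetric and that $\overline{g}(\overline{\mathbb{K}}(\cdot,\cdot),\cdot)$ is totally symmetric, using that $\overline{g}$ restricts to the conformal multiple $e^{2\alpha}\tilde{g}$ on the horizontal block so that self-adjointness of $\mathbb{K}$ transfers, and then feeding $\mathcal{A}(\mathrm{E}_{1},\mathrm{F}_{1})=\mathbb{K}(\mathrm{E}_{1}^{\top},\mathrm{F}_{1}^{\top})$, $\Theta(\mathrm{E}_{1},\mathrm{F}_{1})=\beta\,\overline{g}(\mathrm{E}_{1},\xi)\overline{g}(\mathrm{F}_{1},\xi)$ into the equivalence of Theorem \ref{p1}) is exactly the natural derivation and the one used in the cited source. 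You were also right to flag and dispose of the two bookkeeping points: the pointwise nature of the tensorial identities makes the horizontal transport of $\mathbb{K}$ harmless, and $\beta$ enters only undifferentiated, so its dependence on the $\mathbb{R}$-factor cannot break the statistical axioms.
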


Here we construct an easy example on Kenmotsu statistical manifold by using above Propositions \ref{p} and \ref{p2}. This is as follows:

\begin{example}\label{ex}
Let us consider a holomorphic statistical manifold $(\tilde{B}^{2}, \tilde{\nabla} = \nabla^{\tilde{g}} + \mathbb{K}, \tilde{g}, J)$ \cite{12aliya}, where
\begin{eqnarray*}
\tilde{B}^{2} = \{^{t}(x, y) \in \mathbb{R}^{2}| x > 0\}, \hspace{0.5 cm} \tilde{g} = x[(dx)^{2} + (dy)^{2}], \\
J \partial_{1} = \partial_{2}, \hspace{0.5 cm} J \partial_{2} = - \partial_{1}, \hspace{0.5 cm} \partial_{i} = \frac{\partial}{\partial x^{i}} \hspace{0.2 cm}  for \hspace{0.2 cm} i = 1, 2, \\
\mathbb{K}(\partial_{1}, \partial_{1}) = -\lambda \partial_{1},  \hspace{0.5 cm} \mathbb{K}(\partial_{1}, \partial_{2}) = \mathbb{K}(\partial_{2}, \partial_{1}) = \lambda \partial_{2}, \hspace{0.5 cm} \mathbb{K}(\partial_{2}, \partial_{2}) = \lambda \partial_{1},
\end{eqnarray*}

The affine connections $\tilde{\nabla}$ on $\tilde{B}^{2}$ are defined by
\begin{eqnarray*}
\tilde{\nabla}_{\partial_{1}}\partial_{1} &=& (\frac{1}{2}(x)^{-1} - \lambda )\partial_{1},\\
\tilde{\nabla}_{\partial_{1}}\partial_{2} &=& \tilde{\nabla}_{\partial_{2}}\partial_{1} = (\frac{1}{2}(x)^{-1} + \lambda)\partial_{2},\\
\tilde{\nabla}_{\partial_{2}}\partial_{2} &=& - (\frac{1}{2}(x)^{-1} - \lambda )\partial_{1}.
\end{eqnarray*}

We take a product, that is, $\overline{B}^{3} = \tilde{B} \times \mathbb{R}$, and $(\overline{B}^{3}, \overline{g}, \phi, \xi)$ is the Kenmotsu manifold as given in  Proposition \ref{p}. For this, we define
\begin{eqnarray*}
\phi(x, y, \alpha) = (-y, x, 0), \hspace{0.5 cm} \xi = \frac{\partial}{\partial \alpha} = \partial_{\alpha}, \\
\phi \partial_{1} = -\partial_{2}, \hspace{0.5 cm} \phi \partial_{2} = \partial_{1}, \hspace{0.5 cm} \phi \xi = 0,\\
\overline{g} = e^{2\alpha}[(dx)^{2} + (dy)^{2}] + (d\alpha)^{2},
\end{eqnarray*}
where $(x, y, \alpha)$ denotes the coordinates of $\overline{B}^{3}$. For $\beta = 1$, we define a $(1,2)-$tensor field $\overline{\mathbb{K}} \in \Gamma(T\overline{B}^{3})$ by
\begin{eqnarray*}
\overline{\mathbb{K}}(\partial_{1}, \partial_{1}) = -\lambda \partial_{1}, \hspace{0.5 cm} \overline{\mathbb{K}}(\partial_{1}, \partial_{2}) = \overline{\mathbb{K}}(\partial_{2}, \partial_{1}) = \lambda \partial_{2}, \hspace{0.5 cm} \overline{\mathbb{K}}(\partial_{\alpha}, \partial_{\alpha}) = \partial_{\alpha},\\
\overline{\mathbb{K}}(\partial_{2}, \partial_{2}) = \lambda \partial_{1}, \hspace{0.5 cm} \overline{\mathbb{K}}(\partial_{i}, \partial_{\alpha}) = \overline{\mathbb{K}}(\partial_{\alpha}, \partial_{i}) = 0 \hspace{0.2 cm}  for \hspace{0.2 cm} i = 1,2.
\end{eqnarray*}

Thus, by  Proposition \ref{p2}, we conclude that
$(\overline{\nabla} = \nabla^{\overline{g}} + \overline{\mathbb{K}}, \overline{g}, \phi, \xi)$ is a Kenmotsu statistical structure on $\overline{B}^{3}$.
\end{example}

We recall the definition of Jacobi operator \cite{m} and give the following statistical version of the definition of Jacobi operator:

\begin{definition}\label{df}
Let $(\overline{B}, \overline{\nabla}, \overline{g})$ be a statistical manifold. For any tangent vector field $\mathrm{E}_{1}$ at $p \in \overline{B}$, the Jacobi operator $\overline{R}_{\mathrm{E}_{1}}$ is defined by
\begin{eqnarray}\label{000}
(\overline{R}_{\mathrm{E}_{1}}\mathrm{F}_{1})(p) = (\overline{R}(\mathrm{F}_{1},\mathrm{E}_{1})\mathrm{E}_{1})(p),
\end{eqnarray}
for any $\mathrm{F}_{1} \in \Gamma(T\overline{B})$.
\end{definition}

\begin{remark}
In particular, we replace $\mathrm{E}_{1}$ by $\xi$ in the equation  (\ref{000}), then we call $\overline{R}_{\xi}$ as structure Jacobi operator.
\end{remark}

We give the following propositions:

\begin{proposition}\label{pp}
Let $(\tilde{B}, \tilde{\nabla} = \nabla^{\tilde{g}} + \mathbb{K}, \tilde{g}, J)$ and $(\overline{B}, \overline{\nabla} = \nabla^{\overline{g}} + \overline{\mathbb{K}}, \overline{g}, \phi, \xi)$ be a holomorphic statistical manifold, and the Kenmotsu statistical manifold as in  Theorem \ref{p1}, respectively. Then the structure Jacobi operator is parallel with respect to $\overline{\nabla}$.
\end{proposition}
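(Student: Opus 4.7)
The plan is to derive an explicit algebraic expression for the $(1,1)$-tensor $\overline{R}_{\xi}$ acting on $\Gamma(T\overline{B})$ and then verify $\overline{\nabla}\,\overline{R}_{\xi} = 0$ by direct covariant differentiation, exploiting the severe restriction the Kenmotsu statistical condition places on the difference tensor $\overline{\mathbb{K}}$ along $\xi$. For brevity write $\eta(\cdot):=\overline{g}(\cdot,\xi)$.

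First, I would pin down $\overline{\mathbb{K}}(\mathrm{E}_{1}, \xi)$. Applying the defining identity $\overline{\mathbb{K}}(\mathrm{E}_{1}, \phi \mathrm{F}_{1}) = -\phi\,\overline{\mathbb{K}}(\mathrm{E}_{1}, \mathrm{F}_{1})$ with $\mathrm{F}_{1} = \xi$, and using $\phi\xi = 0$, gives $\phi\,\overline{\mathbb{K}}(\mathrm{E}_{1}, \xi) = 0$; hence $\overline{\mathbb{K}}(\mathrm{E}_{1}, \xi)$ is pointwise a multiple of $\xi$. Combined with the decomposition $\overline{\mathbb{K}} = \mathcal{A} + \Theta\,\xi$ of Theorem~\ref{p1} and the conditions $\mathcal{A}(\mathrm{E}_{1}, \xi) = 0$, $\Theta(\mathrm{E}_{1}, \mathrm{F}_{2}) = 0$ for $\mathrm{F}_{2}\in\Gamma(T\tilde{B})$, together with the symmetry of $\Theta$, this forces $\overline{\mathbb{K}}(\mathrm{E}_{1}, \xi) = \beta\,\eta(\mathrm{E}_{1})\,\xi$, where $\beta := \Theta(\xi,\xi)$. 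In particular, $\overline{\nabla}_{\mathrm{E}_{1}}\xi = \mathrm{E}_{1} - \eta(\mathrm{E}_{1})\xi + \beta\,\eta(\mathrm{E}_{1})\,\xi$.

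Next, I would compute $\overline{R}_{\xi}\mathrm{F}_{1} = \overline{R}(\mathrm{F}_{1}, \xi)\xi$ via the standard expansion
\begin{equation*}
\overline{R}(\mathrm{E}, \mathrm{F})\mathrm{G} = \overline{R}^{\overline{g}}(\mathrm{E}, \mathrm{F})\mathrm{G} + (\nabla^{\overline{g}}_{\mathrm{E}}\overline{\mathbb{K}})(\mathrm{F}, \mathrm{G}) - (\nabla^{\overline{g}}_{\mathrm{F}}\overline{\mathbb{K}})(\mathrm{E}, \mathrm{G}) + [\overline{\mathbb{K}}_{\mathrm{E}}, \overline{\mathbb{K}}_{\mathrm{F}}]\mathrm{G}.
\end{equation*}
The Riemannian piece reduces to the Kenmotsu identity $\overline{R}^{\overline{g}}(\mathrm{F}_{1}, \xi)\xi = \eta(\mathrm{F}_{1})\xi - \mathrm{F}_{1}$. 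Each $\overline{\mathbb{K}}$-correction simplifies using the formula $\overline{\mathbb{K}}(\cdot, \xi) = \beta\,\eta(\cdot)\,\xi$ from step one and the Kenmotsu identity $\nabla^{\overline{g}}_{\mathrm{X}}\xi = \mathrm{X} - \eta(\mathrm{X})\xi$; in particular the commutator $[\overline{\mathbb{K}}_{\mathrm{F}_{1}}, \overline{\mathbb{K}}_{\xi}]\xi$ vanishes because both $\overline{\mathbb{K}}_{\xi}\xi$ and $\overline{\mathbb{K}}_{\mathrm{F}_{1}}\xi$ are collinear with $\xi$. The net outcome should be a closed form expressing $\overline{R}_{\xi}$ as a scalar multiple of the Kenmotsu tensor $\mathrm{F}_{1}\mapsto\eta(\mathrm{F}_{1})\xi - \mathrm{F}_{1} = -\phi^{2}\mathrm{F}_{1}$.

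Finally, I would differentiate covariantly: $(\overline{\nabla}_{\mathrm{E}_{1}}\overline{R}_{\xi})\mathrm{F}_{1} = \overline{\nabla}_{\mathrm{E}_{1}}(\overline{R}_{\xi}\mathrm{F}_{1}) - \overline{R}_{\xi}(\overline{\nabla}_{\mathrm{E}_{1}}\mathrm{F}_{1})$. Expanding $\overline{\nabla} = \nabla^{\overline{g}} + \overline{\mathbb{K}}$, using the formula for $\overline{\nabla}_{\mathrm{E}_{1}}\xi$ from step one together with the dual-connection identity $\mathrm{E}(\eta(\mathrm{F})) - \eta(\overline{\nabla}_{\mathrm{E}}\mathrm{F}) = \overline{g}(\mathrm{F}, \overline{\nabla}^{\ast}_{\mathrm{E}}\xi)$, the resulting terms should cancel pairwise. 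The main obstacle is the bookkeeping of terms involving $\beta$ and its covariant derivatives arising through $(\nabla^{\overline{g}}_{\mathrm{E}}\overline{\mathbb{K}})(\xi, \xi)$: their cancellation hinges on a nontrivial identity linking $\beta$, $\eta$, and the Kenmotsu-statistical compatibility $\overline{\mathbb{K}}(\mathrm{E}, \phi \mathrm{F}) = -\phi\,\overline{\mathbb{K}}(\mathrm{E}, \mathrm{F})$, and isolating the precise relation that forces these derivative terms to drop out is the key technical step.
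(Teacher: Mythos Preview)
Your approach is considerably more elaborate than the paper's, which is essentially two lines. The paper does not expand $\overline{R}$ at all: it simply cites formula (3.18) of \cite{f2} to obtain $\overline{R}_{\xi}(\mathrm{E}_{2}) = \overline{R}(\mathrm{E}_{2},\xi)\xi = -\mathrm{E}_{2}$ for every $\mathrm{E}_{2}\in\Gamma(T\tilde{B})$, and then parallelism is immediate because $\overline{R}_{\xi}$ acts as minus the identity:
\[
(\overline{\nabla}_{\mathrm{F}_{2}}\overline{R}_{\xi})(\mathrm{E}_{2})
= \overline{\nabla}_{\mathrm{F}_{2}}\big(\overline{R}_{\xi}\mathrm{E}_{2}\big) - \overline{R}_{\xi}\big(\overline{\nabla}_{\mathrm{F}_{2}}\mathrm{E}_{2}\big)
= -\overline{\nabla}_{\mathrm{F}_{2}}\mathrm{E}_{2} + \overline{\nabla}_{\mathrm{F}_{2}}\mathrm{E}_{2}=0.
\]
No decomposition of $\overline{\mathbb{K}}$, no $(\nabla^{\overline{g}}\overline{\mathbb{K}})$-terms, no $\beta$ ever appears. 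The hard work you are setting up (steps one and two of your plan) is exactly what has already been packaged into the cited formula from \cite{f2}; once $\overline{R}_{\xi}=-\mathrm{Id}$ on $\Gamma(T\tilde B)$ is known, the ``main obstacle'' you identify simply does not arise.

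There is also a concrete issue with your step two. With the full curvature expansion you write down, the term $(\nabla^{\overline{g}}_{\mathrm{F}_{1}}\overline{\mathbb{K}})(\xi,\xi)$ contributes $(\mathrm{F}_{1}\beta)\xi + \beta\big(\mathrm{F}_{1}-\eta(\mathrm{F}_{1})\xi\big)$, so after collecting everything one finds
\[
\overline{R}(\mathrm{F}_{1},\xi)\xi
= (1-\beta)\big(\eta(\mathrm{F}_{1})\xi-\mathrm{F}_{1}\big)
+ \big(\mathrm{F}_{1}\beta-(\xi\beta)\eta(\mathrm{F}_{1})\big)\xi,
\]
which is \emph{not} a scalar multiple of $-\phi^{2}\mathrm{F}_{1}$ unless $\beta$ depends only on the $\mathbb{R}$-coordinate. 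Your outline therefore needs adjustment at this point: either restrict $\beta$, or carry this extra $\xi$-piece through step three. The paper avoids this entirely by quoting the final formula for $\overline{R}_{\xi}$ rather than rebuilding it from $\overline{R}^{\overline{g}}$ and $\overline{\mathbb{K}}$.
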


\begin{proof}
From (3.18) of \cite{f2} and (\ref{000}), we have $\overline{R}_{\xi}(\mathrm{E}_{2}) = \overline{R}(\mathrm{E}_{2}, \xi)\xi = -\mathrm{E}_{2}$, for any $\mathrm{E}_{2} \in \Gamma(T\tilde{B})$. For any $\mathrm{F}_{2} \in \Gamma(T\tilde{B})$, we have
\begin{eqnarray*}
(\overline{\nabla}_{\mathrm{F}_{2}}\overline{R}_{\xi})(\mathrm{E}_{2}) &=&  \overline{\nabla}_{\mathrm{F}_{2}}\overline{R}_{\xi}(\mathrm{E}_{2}) - \overline{R}_{\xi}(\overline{\nabla}_{\mathrm{F}_{2}}\mathrm{E}_{2})\\
&=& - \overline{\nabla}_{\mathrm{F}_{2}}\mathrm{E}_{2} + \overline{\nabla}_{\mathrm{F}_{2}}\mathrm{E}_{2} = 0.
\end{eqnarray*}
Hence, we get our assertion.
\end{proof}

\begin{remark}
In the same  Proposition \ref{pp}, one can prove that the structure Jacobi operator is parallel with respect to $\overline{\nabla}^{\ast}$ also.
\end{remark}

\begin{definition}
A statistical manifold is said to be a Ricci-flat statistical manifold if its Ricci curvature vanishes.
\end{definition}

\begin{proposition}\label{pp1}
Let $(\tilde{B}, \tilde{\nabla} = \nabla^{\tilde{g}} + \mathbb{K}, \tilde{g}, J)$ and $(\overline{B}, \overline{\nabla} = \nabla^{\overline{g}} + \overline{\mathbb{K}}, \overline{g}, \phi, \xi)$ be a holomorphic statistical manifold, and the Kenmotsu statistical manifold as in  Theorem \ref{p1}, respectively. If $\overline{B}$ is of constant $\phi-$sectional curvature $\overline{c}$ and dim$(\tilde{B})=2s$, then Ricci tensor $\tilde{Ric}$ of $(\tilde{B}, \tilde{\nabla}, \tilde{g})$ is given by $$\tilde{Ric} = e^{2\alpha}(\frac{(\overline{c}+1)(s +1)}{2})\tilde{g}$$ for any $s \in \mathbb{R}$. Furthermore, $\tilde{B}$ is Ricci-flat statistical manifold if $\overline{c}=-1$.
\end{proposition}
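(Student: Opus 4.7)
The plan is to specialize the constant $\phi$-sectional curvature formula \eqref{d} to horizontal arguments, transport the resulting identity from $\overline{R}$ on $\overline{B}$ down to $\tilde{R}$ on $\tilde{B}$ via the warped-product structure, and then trace to extract $\tilde{Ric}$.

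First I would take $X, Y, Z \in T\tilde{B}$ lifted to $T\overline{B}$. Since $\overline{g}(X, \xi) = 0$ and $\phi|_{T\tilde{B}} = J$, the six $\xi$-terms on the right-hand side of \eqref{d} vanish, leaving
\[
\overline{R}(X, Y)Z = \tfrac{\overline{c}-3}{4}[\overline{g}(Y, Z)X - \overline{g}(X, Z)Y] + \tfrac{\overline{c}+1}{4}[\overline{g}(JY, Z)JX - \overline{g}(JX, Z)JY - 2\overline{g}(JX, Y)JZ].
\]
Using $\overline{g}|_{T\tilde{B}} = e^{2\alpha}\tilde{g}$, this can be rewritten in terms of $\tilde{g}$ with an overall $e^{2\alpha}$ factor.

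Next I would establish the statistical warped-product curvature identity
\[
\overline{R}(X, Y)Z = \tilde{R}(X, Y)Z + \overline{g}(X, Z)Y - \overline{g}(Y, Z)X
\]
for horizontal $X, Y, Z$. The warped-product formulas give $\overline{\nabla}^{\overline{g}}_{X}Y = \tilde{\nabla}^{\tilde{g}}_{X}Y - \overline{g}(X, Y)\xi$ and $\overline{\nabla}^{\overline{g}}_{X}\xi = X$; combining with Theorem \ref{p1}---which supplies $\overline{\mathbb{K}}(X, Y) = \mathbb{K}(X, Y)$ on horizontal arguments and $\overline{\mathbb{K}}(X, \xi) = 0$ for $X \in T\tilde{B}$ (via $\mathcal{A}(\cdot, \xi) = 0$ and the symmetry of $\Theta$)---gives $\overline{\nabla}_{X}Y = \tilde{\nabla}_{X}Y - \overline{g}(X, Y)\xi$ and $\overline{\nabla}_{X}\xi = X$. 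Expanding $\overline{R}(X, Y)Z$ from this and collecting terms, the horizontal components contribute exactly $\tilde{R}(X, Y)Z + \overline{g}(X, Z)Y - \overline{g}(Y, Z)X$, while the $\xi$-components collapse to $-e^{2\alpha}[\tilde{g}(X, \mathbb{K}(Y, Z)) - \tilde{g}(Y, \mathbb{K}(X, Z))]\xi$ and vanish because the $\tilde{g}$-self-adjointness of each $\mathbb{K}_{\cdot}$ together with the symmetry of $\mathbb{K}$ in its arguments makes $\tilde{g}(X, \mathbb{K}(Y, Z))$ symmetric in $X, Y$.

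Combining the two identities and using $\tfrac{\overline{c}-3}{4} + 1 = \tfrac{\overline{c}+1}{4}$, I would then obtain
\[
\tilde{R}(X, Y)Z = \tfrac{\overline{c}+1}{4}e^{2\alpha}\bigl\{\tilde{g}(Y, Z)X - \tilde{g}(X, Z)Y + \tilde{g}(JY, Z)JX - \tilde{g}(JX, Z)JY - 2\tilde{g}(JX, Y)JZ\bigr\}.
\]
Tracing over a $\tilde{g}$-orthonormal frame $\{\hat{e}_{1}, \ldots, \hat{e}_{2s}\}$ of $T\tilde{B}$ and using $\mathrm{tr}(J) = 0$, $\tilde{g}(J\hat{e}_{i}, Y) = -\tilde{g}(\hat{e}_{i}, JY)$, and $J^{2} = -\mathrm{id}$, the five summands contribute $2s, -1, 0, 1, 2$ copies of $\tilde{g}(Y, Z)$ respectively, summing to $2(s+1)\tilde{g}(Y, Z)$. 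This yields $\tilde{Ric} = \tfrac{(\overline{c}+1)(s+1)}{2}e^{2\alpha}\tilde{g}$, and the Ricci-flat assertion follows by setting $\overline{c} = -1$.

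The main technical hurdle is the second step, lifting the classical warped-product curvature formula from the Levi-Civita to the statistical connection. The specific form of $\overline{\mathbb{K}}$ dictated by Theorem \ref{p1}---the vanishing of $\mathcal{A}(\cdot, \xi)$ and $\Theta(\cdot, \mathrm{F}_{2})$---kills the $\xi$-component of $\overline{\mathbb{K}}$ on horizontal arguments, and the $\tilde{g}$-self-adjointness of $\mathbb{K}$ eliminates the residual $\xi$-terms generated by the warping in the covariant derivative.
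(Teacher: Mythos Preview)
Your proof is correct and follows the same overall strategy as the paper: obtain the explicit form of $\tilde{R}$ on $\tilde{B}$ from the constant $\phi$-sectional curvature condition, then trace over a $\tilde{g}$-orthonormal frame to read off $\tilde{Ric}$. The trace computation---yielding the contributions $2s, -1, 0, 1, 2$ and hence the factor $2(s+1)$---is exactly what the paper does.

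The one difference is in how the curvature formula for $\tilde{R}$ is obtained. The paper simply invokes Proposition~3.9 of \cite{f2}, which already records
\[
\tilde{R}(X,Y)Z = \tfrac{\overline{c}+1}{4}e^{2\alpha}\bigl\{\tilde{g}(Y,Z)X - \tilde{g}(X,Z)Y + \tilde{g}(JY,Z)JX - \tilde{g}(JX,Z)JY - 2\tilde{g}(JX,Y)JZ\bigr\},
\]
and proceeds directly to the trace. You instead re-derive this identity from scratch: specializing \eqref{d} to horizontal arguments, establishing the statistical warped-product relation $\overline{R}(X,Y)Z = \tilde{R}(X,Y)Z + \overline{g}(X,Z)Y - \overline{g}(Y,Z)X$ via the explicit connection formulas, and combining. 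Your verification that the residual $\xi$-component vanishes (using the total symmetry of $\tilde{g}(\mathbb{K}(\cdot,\cdot),\cdot)$) is precisely the ingredient that makes the Levi-Civita warped-product formula carry over to the statistical connection, and is the content hidden inside the cited proposition. So your route is more self-contained---it does not rely on the external reference---at the cost of the extra curvature computation; the paper's route is shorter but black-boxes that step.
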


\begin{proof}
Let $\{\tilde{e}_{1}, \dots, \tilde{e}_{2s}\}$ be a local orthonormal frame of $T_{p}\tilde{B}$, $p \in \tilde{B}$. From Proposition 3.9. of \cite{f2}, we get
\begin{eqnarray*}
\tilde{Ric}(\mathrm{E}_{2}) &=& \sum_{i = 1}^{2s} \tilde{g}(\tilde{R}(\tilde{e}_{i}, \mathrm{E}_{2})\mathrm{E}_{2}, \tilde{e}_{i}) \\
&=& e^{2\alpha}(\frac{\overline{c}+1}{4})\sum_{i = 1}^{2s}\{\tilde{g}(\mathrm{E}_{2}, \mathrm{E}_{2})\tilde{g}(\tilde{e}_{i}, \tilde{e}_{i}) - \tilde{g}(\tilde{e}_{i}, \mathrm{E}_{2})\tilde{g}(\mathrm{E}_{2}, \tilde{e}_{i}) \\
&&+ \tilde{g}(J\mathrm{E}_{2}, \mathrm{E}_{2})\tilde{g}(J\tilde{e}_{i}, \tilde{e}_{i}) - 3 \tilde{g}(J\tilde{e}_{i}, \mathrm{E}_{2})\tilde{g}(J\mathrm{E}_{2}, \tilde{e}_{i})\}\\
&=& e^{2\alpha}(\frac{\overline{c}+1}{4})\{(2s - 1) ||\mathrm{E}_{2}||^{2} + 3\tilde{g}(J\mathrm{E}_{2}, J\mathrm{E}_{2})\}\\
&=& e^{2\alpha}(\frac{\overline{c}+1}{4})\{(2s + 2) ||\mathrm{E}_{2}||^{2}\}\\
&=& e^{2\alpha}(\frac{(\overline{c}+1)(s + 1)}{2})||\mathrm{E}_{2}||^{2},
\end{eqnarray*}
where $\tilde{R}$ denotes the statistical curvature tensor field of $(\tilde{\nabla}, \tilde{g})$, $s \in \mathbb{R}$ and $\mathrm{E}_{2} \in \Gamma(T\tilde{B})$.
If we take $\overline{c}=-1$, $\tilde{Ric} = 0$ implies that Ricci curvature of $\tilde{B}$ vanishes, and hence $\tilde{B}$ is Ricci-flat statistical manifold. This is the required assertion.
\end{proof}

\begin{proposition}\label{1p}
Let $(\tilde{B}, \tilde{\nabla} = \nabla^{\tilde{g}} + \mathbb{K}, \tilde{g}, J)$ and $(\overline{B}, \overline{\nabla} = \nabla^{\overline{g}} + \overline{\mathbb{K}}, \overline{g}, \phi, \xi)$ be a holomorphic statistical manifold, and the Kenmotsu statistical manifold as in  Proposition \ref{p2}, respectively. If $\mathbb{K}(\mathrm{E}_{2}, \mathrm{F}_{2}) = 0$, and $\overline{\mathbb{K}}(\mathrm{E}_{1}, \mathrm{F}_{1}) = \beta \overline{g}(\mathrm{E}_{1}, \xi)\overline{g}(\mathrm{F}_{1}, \xi)\xi$, for any $\mathrm{E}_{2}, \mathrm{F}_{2} \in \Gamma(T\tilde{B})$, $\beta \in C^{\infty}(\overline{B})$ and $\mathrm{E}_{1}, \mathrm{F}_{1} \in \Gamma(T\overline{B})$. Then the following formulae hold:
\begin{enumerate}
\item[(1)] $\overline{R}(\mathrm{E}_{1}, \mathrm{F}_{1})\xi = \overline{g}(\mathrm{F}_{1}, \xi)\mathrm{E}_{1} - \overline{g}(\mathrm{E}_{1}, \xi)\mathrm{F}_{1}$.
\item[(2)] $\overline{R}(\xi, \mathrm{E}_{1})\mathrm{F}_{1} = \overline{g}(\mathrm{E}_{1}, \mathrm{F}_{1})\xi - \overline{g}(\xi, \mathrm{F}_{1})\mathrm{E}_{1}$.
\item[(3)] $\overline{R}(\phi \mathrm{E}_{1}, \xi)\mathrm{F}_{1} = \overline{g}(\xi, \mathrm{F}_{1})\phi \mathrm{E}_{1} - \overline{g}(\phi \mathrm{E}_{1}, \mathrm{F}_{1})\xi$.
\item[(4)] $\overline{R}(\mathrm{E}_{1}, \phi \mathrm{F}_{1})\xi + \overline{R}(\xi, \mathrm{E}_{1})\phi \mathrm{F}_{1} = - \overline{R}(\phi \mathrm{F}_{1}, \xi)\mathrm{E}_{1}$.
\item[(5)] The sectional curvature $\overline{\mathcal{K}}$ for a plane section containing $\xi$ is equal to $\overline{g}(\mathrm{E}_{1}, \mathrm{E}_{1}) - \overline{g}(\mathrm{E}_{1}, \xi)^{2}$ at every point of $\overline{B}$, that is,
    \begin{eqnarray*}
    \overline{\mathcal{K}}(\mathrm{E}_{1} \wedge \xi) = \overline{g}(\overline{R}(\mathrm{E}_{1}, \xi)\xi, \mathrm{E}_{1}) = \overline{g}(\mathrm{E}_{1}, \mathrm{E}_{1}) - \overline{g}(\mathrm{E}_{1}, \xi)^{2}.
    \end{eqnarray*}
\end{enumerate}
\end{proposition}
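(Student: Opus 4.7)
The plan is to exploit the decomposition $\overline{\nabla} = \nabla^{\overline{g}} + \overline{\mathbb{K}}$ together with the very special form $\overline{\mathbb{K}}(\mathrm{E}_1, \mathrm{F}_1) = \beta\, \overline{g}(\mathrm{E}_1, \xi)\overline{g}(\mathrm{F}_1, \xi)\xi$. Since the curvature tensors of the two connections are related by
\begin{equation*}
\overline{R}(\mathrm{E}_1, \mathrm{F}_1)\mathrm{G}_1 = R^{\overline{g}}(\mathrm{E}_1, \mathrm{F}_1)\mathrm{G}_1 + (\nabla^{\overline{g}}_{\mathrm{E}_1}\overline{\mathbb{K}})(\mathrm{F}_1, \mathrm{G}_1) - (\nabla^{\overline{g}}_{\mathrm{F}_1}\overline{\mathbb{K}})(\mathrm{E}_1, \mathrm{G}_1) + [\overline{\mathbb{K}}_{\mathrm{E}_1}, \overline{\mathbb{K}}_{\mathrm{F}_1}]\mathrm{G}_1,
\end{equation*}
each identity reduces to routine bookkeeping once two inputs are in place: the classical Kenmotsu identities on $\overline{B}$, namely $\nabla^{\overline{g}}_X \xi = X - \overline{g}(X, \xi)\xi$ together with the standard expressions for $R^{\overline{g}}(\cdot, \cdot)\xi$ and $R^{\overline{g}}(\xi, \cdot)\cdot$, and the fact that $\overline{\mathbb{K}}$ is always $\xi$-valued.

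I would first dispatch the commutator. Substituting the form of $\overline{\mathbb{K}}$ gives
\begin{equation*}
\overline{\mathbb{K}}(\mathrm{E}_1, \overline{\mathbb{K}}(\mathrm{F}_1, \mathrm{G}_1)) = \beta^2\, \overline{g}(\mathrm{E}_1, \xi)\overline{g}(\mathrm{F}_1, \xi)\overline{g}(\mathrm{G}_1, \xi)\xi,
\end{equation*}
which is symmetric in $\mathrm{E}_1 \leftrightarrow \mathrm{F}_1$, so $[\overline{\mathbb{K}}_{\mathrm{E}_1}, \overline{\mathbb{K}}_{\mathrm{F}_1}]\mathrm{G}_1 = 0$ for every $\mathrm{G}_1$. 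Hence only the derivative terms and $R^{\overline{g}}$ contribute to $\overline{R}$. For (1), the plan is to set $\mathrm{G}_1 = \xi$, expand $(\nabla^{\overline{g}}_{\mathrm{E}_1}\overline{\mathbb{K}})(\mathrm{F}_1, \xi)$ via Leibniz together with $\nabla^{\overline{g}}_X \xi = X - \overline{g}(X, \xi)\xi$, and then antisymmetrize in $\mathrm{E}_1, \mathrm{F}_1$. The $\beta$-derivative pieces and the $\eta$-factor pieces produced this way are all $\xi$-valued and symmetric in the two entries, so they die under antisymmetrization; what survives combines with the Kenmotsu identity for $R^{\overline{g}}(\mathrm{E}_1, \mathrm{F}_1)\xi$ to give the stated right-hand side.

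For (2), the same recipe applies with $\mathrm{G}_1$ arbitrary but $\xi$ placed in the first curvature slot; the classical identity $R^{\overline{g}}(\xi, \mathrm{E}_1)\mathrm{F}_1 = \overline{g}(\mathrm{E}_1, \mathrm{F}_1)\xi - \overline{g}(\xi, \mathrm{F}_1)\mathrm{E}_1$ yields the desired answer, and the correction terms again telescope using that $\overline{\mathbb{K}}(\xi, \cdot) = \beta\,\overline{g}(\cdot, \xi)\xi$. Part (3) is immediate from (2): replace $\mathrm{E}_1$ by $\phi\mathrm{E}_1$ and use antisymmetry $\overline{R}(\phi\mathrm{E}_1, \xi) = -\overline{R}(\xi, \phi\mathrm{E}_1)$. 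Part (4) is precisely the first Bianchi identity for the torsion-free connection $\overline{\nabla}$ applied to the cyclic triple $(\mathrm{E}_1, \phi\mathrm{F}_1, \xi)$, simply rearranged. Finally, (5) follows from (1) with $\mathrm{F}_1 = \xi$, which gives $\overline{R}(\mathrm{E}_1, \xi)\xi = \mathrm{E}_1 - \overline{g}(\mathrm{E}_1, \xi)\xi$; pairing with $\mathrm{E}_1$ under $\overline{g}$ produces the displayed scalar, and the Opozda sectional curvature in (\ref{r3}) coincides with $\overline{g}(\overline{R}(\mathrm{E}_1, \xi)\xi, \mathrm{E}_1)$ here because the parallel computation for $\overline{R}^\ast$ yields the same value on this slot (the $\overline{\mathbb{K}}$-corrections have identical symmetry properties under $\overline{\mathbb{K}} \mapsto -\overline{\mathbb{K}}$).

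The main obstacle is the antisymmetrization argument for the derivative terms $(\nabla^{\overline{g}}_{\mathrm{E}_1}\overline{\mathbb{K}})(\mathrm{F}_1, \mathrm{G}_1) - (\nabla^{\overline{g}}_{\mathrm{F}_1}\overline{\mathbb{K}})(\mathrm{E}_1, \mathrm{G}_1)$. Differentiating $\overline{\mathbb{K}}$ produces several sub-pieces: one from $\beta$, one from each factor $\overline{g}(\cdot, \xi)$, and one from the $\xi$ in the image. Each must be checked to either be symmetric in $\mathrm{E}_1, \mathrm{F}_1$ or to cancel against its partner; the case of part (2) is the most delicate because, unlike in (1), the image of the differentiated expression is not a priori confined to $\mathrm{span}(\xi)$, and one has to use $\overline{\mathbb{K}}(\mathrm{F}_1, \nabla^{\overline{g}}_{\mathrm{E}_1}\mathrm{G}_1)$ plus the Kenmotsu formula for $\nabla^{\overline{g}}\xi$ in conjunction to extract the cancellations.
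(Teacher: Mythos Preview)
Your decomposition
\[
\overline{R}(\mathrm{E}_1,\mathrm{F}_1)\mathrm{G}_1
= R^{\overline{g}}(\mathrm{E}_1,\mathrm{F}_1)\mathrm{G}_1
+ (\nabla^{\overline{g}}_{\mathrm{E}_1}\overline{\mathbb{K}})(\mathrm{F}_1,\mathrm{G}_1)
- (\nabla^{\overline{g}}_{\mathrm{F}_1}\overline{\mathbb{K}})(\mathrm{E}_1,\mathrm{G}_1)
+ [\overline{\mathbb{K}}_{\mathrm{E}_1},\overline{\mathbb{K}}_{\mathrm{F}_1}]\mathrm{G}_1
\]
is the curvature of $\overline{\nabla}$ itself, whereas the paper's proof invokes only equation~(\ref{2.1}), namely $\overline{S}=\overline{R}^{\overline{g}}+[\overline{\mathbb{K}},\overline{\mathbb{K}}]$, which carries \emph{no} covariant-derivative terms at all. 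With your (correct) observation that the commutator vanishes, the paper obtains $\overline{S}=\overline{R}^{\overline{g}}$ in one line and then reads (1)--(3) off from the classical Kenmotsu identities; (4) is obtained by adding (1) and (2) rather than by Bianchi, and (5) by setting $\mathrm{F}_1=\mathrm{G}_1=\xi$, $\mathrm{H}_1=\mathrm{E}_1$ in (\ref{2.1}).

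The more serious issue is that your cancellation claim for the derivative terms is not correct. Taking $\mathrm{G}_1=\xi$ and writing $\eta=\overline{g}(\cdot,\xi)$, one finds from $\nabla^{\overline{g}}_X\xi=X-\eta(X)\xi$ that
\[
(\nabla^{\overline{g}}_X\overline{\mathbb{K}})(Y,\xi)-(\nabla^{\overline{g}}_Y\overline{\mathbb{K}})(X,\xi)
= \bigl[(X\beta)\eta(Y)-(Y\beta)\eta(X)\bigr]\xi
+ \beta\bigl[\eta(Y)X-\eta(X)Y\bigr].
\]
Neither summand is symmetric in $X,Y$: the $\beta$-derivative piece is $\xi$-valued but genuinely antisymmetric, and the piece coming from $\nabla^{\overline{g}}\xi$ in the image is not even $\xi$-valued. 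Hence the curvature of $\overline{\nabla}$ does \emph{not} reduce to $R^{\overline{g}}$ on these slots, and your route to (1)--(3) breaks down. The resolution is that the tensor denoted $\overline{R}$ in this proposition is the statistical curvature $\overline{S}$ of (\ref{2.1}) (as the proof's explicit reference to (\ref{2.1}) and the appearance of the Opozda sectional curvature $\overline{\mathcal{K}}$ in (5) confirm), for which the offending derivative terms are simply absent and your commutator computation alone suffices.
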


\begin{proof}
By using (\ref{2.1}) and straightforward computation, we have our assertions (1), (2) and (3).
We get (4) easily by adding (1) and (2). Furthermore, we evaluate $\overline{g}(\overline{R}(\mathrm{E}_{1}, \mathrm{F}_{1})\mathrm{G}_{1}, \mathrm{H}_{1})$ for $\mathrm{H}_{1} =\mathrm{E}_{1}$ and $\mathrm{F}_{1} = \mathrm{G}_{1} =\xi$ and use (\ref{2.1}), we get our last assertion (5).
\end{proof}

\begin{proposition}\label{pr}
Let $(\tilde{B}, \tilde{\nabla} = \nabla^{\tilde{g}} + \mathbb{K}, \tilde{g}, J)$ and $(\overline{B}, \overline{\nabla} = \nabla^{\overline{g}} + \overline{\mathbb{K}}, \overline{g}, \phi, \xi)$ be a holomorphic statistical manifold, and the Kenmotsu statistical manifold as in  Proposition \ref{1p}, respectively. If $\overline{B}$ is of constant $\phi-$sectional curvature $\overline{c}$ and dim$(\overline{B})=2s+1$, then the Ricci tensor $\overline{Ric}$ of $\overline{B}$ has the following forms:
\begin{enumerate}
\item[(1)] $\overline{Ric}(\mathrm{E}_{1}, \mathrm{F}_{1}) =  t_{1} \hspace{0.2 cm} \overline{g}(\mathrm{E}_{1}, \mathrm{F}_{1}) + t_{2} \hspace{0.2 cm} \overline{g}(\mathrm{E}_{1}, \xi)\overline{g}(\mathrm{F}_{1}, \xi)$, where
    \begin{eqnarray*}
 t_{1} = \frac{\overline{c}(s +1)-3s +1}{2}, \hspace{0.5 cm} \textrm{and} \hspace{0.5 cm} t_{2} = \frac{-(\overline{c}+1)(s +1)}{2}.
 \end{eqnarray*}
Moreover, $\overline{B}$ is not a Ricci-flat statistical manifold.
\item[(2)] $\overline{Ric}(\mathrm{E}_{1}, \xi) = -2s \overline{g}(\mathrm{E}_{1}, \xi)$.
\item[(3)] $\overline{Ric}(\phi \mathrm{E}_{1}, \phi \mathrm{F}_{1}) = \overline{Ric}(\mathrm{E}_{1}, \mathrm{F}_{1}) + 2s \overline{g}(\mathrm{E}_{1}, \xi)\overline{g}(\mathrm{F}_{1}, \xi)$.
\end{enumerate}
\end{proposition}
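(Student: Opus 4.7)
The plan is to derive all three formulas from the explicit curvature expression \eqref{d} for a Kenmotsu statistical manifold of constant $\phi$-sectional curvature $\overline{c}$, by tracing against a suitably chosen orthonormal frame. Throughout, I will use the standard almost contact relations $\phi\xi = 0$, $\overline{g}(\phi \mathrm{E}_{1}, \phi \mathrm{F}_{1}) = \overline{g}(\mathrm{E}_{1}, \mathrm{F}_{1}) - \overline{g}(\mathrm{E}_{1}, \xi)\overline{g}(\mathrm{F}_{1}, \xi)$, and $\overline{g}(\phi \mathrm{E}_{1}, \mathrm{F}_{1}) = -\overline{g}(\mathrm{E}_{1}, \phi \mathrm{F}_{1})$, all of which are available on an almost contact metric manifold and in particular on $\overline{B}$.

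First I would prove assertion~(1). Choose a local $\phi$-adapted orthonormal frame $\{e_{1}, \dots, e_{2s}, e_{2s+1} = \xi\}$ at $p \in \overline{B}$ and write
\begin{equation*}
\overline{Ric}(\mathrm{E}_{1}, \mathrm{F}_{1}) = \sum_{i=1}^{2s+1} \overline{g}\bigl(\overline{R}(e_{i}, \mathrm{E}_{1})\mathrm{F}_{1}, e_{i}\bigr).
\end{equation*}
Substituting \eqref{d} term by term, the $\tfrac{\overline{c}-3}{4}$-piece collapses to $\tfrac{\overline{c}-3}{4}\cdot 2s\,\overline{g}(\mathrm{E}_{1}, \mathrm{F}_{1})$ by the usual trace identity. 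In the $\tfrac{\overline{c}+1}{4}$-piece, the three $\phi$-terms contribute $\sum_i \overline{g}(\phi \mathrm{E}_{1}, \mathrm{F}_{1})\overline{g}(\phi e_{i}, e_{i})$, $-\sum_i \overline{g}(\phi e_{i}, \mathrm{F}_{1})\overline{g}(\phi \mathrm{E}_{1}, e_{i})$, and $-2\sum_i \overline{g}(\phi e_{i}, \mathrm{E}_{1})\overline{g}(\phi \mathrm{F}_{1}, e_{i})$; summing these using $\sum_i \overline{g}(\phi e_i, e_i) = 0$ and $\sum_i \overline{g}(\phi \mathrm{E}_{1}, e_i)\overline{g}(\phi \mathrm{F}_{1}, e_i) = \overline{g}(\phi \mathrm{E}_{1}, \phi \mathrm{F}_{1})$ yields $3\,\overline{g}(\phi \mathrm{E}_{1}, \phi \mathrm{F}_{1}) = 3\bigl(\overline{g}(\mathrm{E}_{1}, \mathrm{F}_{1}) - \overline{g}(\mathrm{E}_{1}, \xi)\overline{g}(\mathrm{F}_{1}, \xi)\bigr)$. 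The four remaining $\xi$-terms in \eqref{d} likewise contract to a linear combination of $\overline{g}(\mathrm{E}_{1}, \mathrm{F}_{1})$ and $\overline{g}(\mathrm{E}_{1}, \xi)\overline{g}(\mathrm{F}_{1}, \xi)$. Collecting coefficients should give exactly $t_{1} = \tfrac{\overline{c}(s+1) - 3s + 1}{2}$ and $t_{2} = -\tfrac{(\overline{c}+1)(s+1)}{2}$. For the Ricci-flat claim it suffices to observe that $\overline{Ric} \equiv 0$ forces both $t_{1} = 0$ and $t_{2} = 0$ (by testing on $\mathrm{E}_{1} = \mathrm{F}_{1}$ orthogonal to $\xi$, respectively $\mathrm{E}_{1} = \mathrm{F}_{1} = \xi$), and these two equations give incompatible values of $\overline{c}$ for $s \geq 1$.

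Once (1) is in hand, assertions (2) and (3) are immediate consequences. For (2), set $\mathrm{F}_{1} = \xi$ in (1) and use $\overline{g}(\xi,\xi) = 1$; a direct algebraic simplification gives $t_{1} + t_{2} = -2s$, so $\overline{Ric}(\mathrm{E}_{1}, \xi) = (t_{1} + t_{2})\overline{g}(\mathrm{E}_{1}, \xi) = -2s\,\overline{g}(\mathrm{E}_{1}, \xi)$. For (3), apply (1) to the pair $(\phi \mathrm{E}_{1}, \phi \mathrm{F}_{1})$; since $\overline{g}(\phi \mathrm{E}_{1}, \xi) = 0$ and $\overline{g}(\phi \mathrm{E}_{1}, \phi \mathrm{F}_{1}) = \overline{g}(\mathrm{E}_{1}, \mathrm{F}_{1}) - \overline{g}(\mathrm{E}_{1}, \xi)\overline{g}(\mathrm{F}_{1}, \xi)$, one obtains $\overline{Ric}(\phi \mathrm{E}_{1}, \phi \mathrm{F}_{1}) = t_{1}\overline{g}(\mathrm{E}_{1}, \mathrm{F}_{1}) - t_{1}\overline{g}(\mathrm{E}_{1}, \xi)\overline{g}(\mathrm{F}_{1}, \xi)$, and since $-t_{1} = t_{2} + 2s$, this equals $\overline{Ric}(\mathrm{E}_{1}, \mathrm{F}_{1}) + 2s\,\overline{g}(\mathrm{E}_{1}, \xi)\overline{g}(\mathrm{F}_{1}, \xi)$.

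The main obstacle is the bookkeeping in step~(1): the curvature formula \eqref{d} has seven tensorial summands, and after tracing each one must be expressed in the two-term basis $\{\overline{g}(\mathrm{E}_{1}, \mathrm{F}_{1}),\ \overline{g}(\mathrm{E}_{1}, \xi)\overline{g}(\mathrm{F}_{1}, \xi)\}$. I do not anticipate any conceptual difficulty; the only care needed is to not double-count the contribution of the frame vector $e_{2s+1} = \xi$, which some terms (those with a $\phi$ factor) annihilate and others (those with a $\xi$ factor) do not.
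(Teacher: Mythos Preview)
Your proof is correct, but it takes a different route from the paper's. The paper exploits the specific hypotheses inherited from Proposition~\ref{1p}: since $\overline{\mathbb{K}}(\mathrm{E}_{1},\mathrm{F}_{1}) = \beta\,\overline{g}(\mathrm{E}_{1},\xi)\overline{g}(\mathrm{F}_{1},\xi)\xi$, the commutator $[\overline{\mathbb{K}}_{\mathrm{E}_{1}},\overline{\mathbb{K}}_{\mathrm{F}_{1}}]$ in \eqref{2.1} vanishes, so the statistical curvature coincides with the Levi--Civita curvature $\overline{R}^{\overline{g}}$; the three Ricci identities are then quoted from the classical Kenmotsu literature (Yano--Kon). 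Your argument instead traces the curvature expression \eqref{d} directly against an orthonormal frame, carrying out the contraction explicitly and then deducing (2) and (3) algebraically from (1). This is more laborious but entirely self-contained: you neither invoke the vanishing of the $\overline{\mathbb{K}}$-commutator nor cite the Riemannian result, and in fact your computation never uses the special form of $\overline{\mathbb{K}}$ from Proposition~\ref{1p} at all --- it applies to any Kenmotsu statistical manifold whose curvature satisfies \eqref{d}. The paper's route is shorter and more conceptual (reduce to the Riemannian case); yours is more elementary and slightly more general.
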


\begin{proof}
From (\ref{2.1}) and our assumptions, we have
\begin{eqnarray*}
\overline{g}(\overline{R}(\mathrm{E}_{1}, \mathrm{F}_{1})\mathrm{G}_{1}, \mathrm{H}_{1})
= \overline{g}(\overline{R}^{\overline{g}}(\mathrm{E}_{1}, \mathrm{F}_{1})\mathrm{G}_{1}, \mathrm{H}_{1}).
\end{eqnarray*}
It is known that $\overline{R}^{\overline{g}}$ is written as the right hand side of (1)-(3) (see \cite{11}). We see that the Ricci curvature of $\overline{B}$ never vanish. Thus, $\overline{B}$ can not be a Ricci-flat statistical manifold.
\end{proof}

\begin{example}\label{ex2}
We recall Examples 3.3 and 3.10 of \cite{f2}. $(\mathbb{H}^{2s+1}, \overline{\nabla} = \nabla^{\overline{g}} + \overline{\mathbb{K}}, \overline{g}, \phi, \xi)$ is a Kenmotsu statistical manifold of constant $\phi-$sectional curvature $\overline{c}= -1$. By  Proposition \ref{pr}, we conclude that $\mathbb{H}^{2s+1}$ is not a Ricci-flat statistical manifold.
\end{example}

\begin{proposition}\label{pp2}
Let $(\overline{B}(\overline{c}), \overline{\nabla}, \overline{g}, \phi, \xi)$ be a Kenmotsu statistical manifold of constant $\phi-$sectional curvature $\overline{c}$ and $(N, \nabla, g)$ be a statistical submanifold in $\overline{B}(\overline{c})$ such that $\xi$ is tangent to $N$ and $\phi(TN) \subset TN$. Suppose that
\begin{enumerate}
\item[(1)] $\overline{c}= -1$;
\item[(2)] $h(\mathrm{E}_{1}, \mathrm{F}_{1}) = g(\mathrm{E}_{1}, \mathrm{F}_{1})\mathcal{H}$ and $h^{\ast}(\mathrm{E}_{1}, \mathrm{F}_{1}) = g(\mathrm{E}_{1}, \mathrm{F}_{1})\mathcal{H}^{\ast}$, for any $\mathrm{E}_{1}, \mathrm{F}_{1} \in \Gamma(TN)$.
\end{enumerate}
Then $N$ is a statistical manifold of constant curvature $g(\mathcal{H}, \mathcal{H}^{\ast})-1$ whenever $g(\mathcal{H}, \mathcal{H}^{\ast})$ is a constant.
\end{proposition}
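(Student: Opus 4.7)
The proof is a direct computation using the Gauss equation (\ref{i}) together with the two hypotheses. First, I would substitute $\overline{c}=-1$, which kills the coefficient $(\overline{c}+1)/4$ and therefore annihilates the entire block of terms containing $\phi$ and $\xi$ on the right-hand side of (\ref{i}). The remaining pieces are the term $-\{g(\mathrm{F}_{1},\mathrm{G}_{1})\mathrm{E}_{1} - g(\mathrm{E}_{1},\mathrm{G}_{1})\mathrm{F}_{1}\}$ coming from $(\overline{c}-3)/4 = -1$ and the two averaged shape operator expressions. The hypotheses $\phi(TN)\subset TN$ and $\xi\in TN$ are needed only to ensure that (\ref{i}) is tangentially meaningful, not for the specific computation.

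The second step is to process the shape operator contributions under hypothesis (2). Since $h(\mathrm{F}_{1},\mathrm{G}_{1}) = g(\mathrm{F}_{1},\mathrm{G}_{1})\mathcal{H}$, we have $A_{h(\mathrm{F}_{1},\mathrm{G}_{1})}\mathrm{E}_{1} = g(\mathrm{F}_{1},\mathrm{G}_{1})\,A_{\mathcal{H}}\mathrm{E}_{1}$. To evaluate $A_{\mathcal{H}}\mathrm{E}_{1}$ I would invoke the defining relation $g(A_{\mathcal{H}}\mathrm{E}_{1}, X) = \overline{g}(h^{\ast}(\mathrm{E}_{1},X), \mathcal{H})$ and then apply hypothesis (2) to $h^{\ast}$, yielding
\[
g(A_{\mathcal{H}}\mathrm{E}_{1}, X) \;=\; g(\mathrm{E}_{1},X)\,\overline{g}(\mathcal{H}^{\ast},\mathcal{H}),
\]
so $A_{\mathcal{H}}\mathrm{E}_{1} = g(\mathcal{H},\mathcal{H}^{\ast})\mathrm{E}_{1}$. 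The analogous manipulation with the roles of $h$ and $h^{\ast}$ swapped gives $A^{\ast}_{\mathcal{H}^{\ast}}\mathrm{E}_{1} = g(\mathcal{H},\mathcal{H}^{\ast})\mathrm{E}_{1}$. Plugging these identities into (\ref{i}) collapses both averaged brackets, and the Gauss equation simplifies to
\[
R(\mathrm{E}_{1},\mathrm{F}_{1})\mathrm{G}_{1} \;=\; \bigl(g(\mathcal{H},\mathcal{H}^{\ast})-1\bigr)\bigl\{g(\mathrm{F}_{1},\mathrm{G}_{1})\mathrm{E}_{1} - g(\mathrm{E}_{1},\mathrm{G}_{1})\mathrm{F}_{1}\bigr\}.
\]
The assumption that $g(\mathcal{H},\mathcal{H}^{\ast})$ is constant then identifies the right-hand side as the curvature tensor of a space of constant curvature equal to $g(\mathcal{H},\mathcal{H}^{\ast})-1$, finishing the proof; by symmetry of the dual structure the analogous identity holds for $R^{\ast}$, so the statistical curvature $S = (R+R^{\ast})/2$ has the same constant-curvature form.

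I expect no genuine obstacle beyond careful bookkeeping. The single delicate point is the asymmetric pairing in the duality identities $\overline{g}(h(\cdot,\cdot),\mathcal{U}) = g(A^{\ast}_{\mathcal{U}}\cdot,\cdot)$ and $\overline{g}(h^{\ast}(\cdot,\cdot),\mathcal{U}) = g(A_{\mathcal{U}}\cdot,\cdot)$: one must resist the temptation to write $A_{\mathcal{H}}\mathrm{E}_{1} = \|\mathcal{H}\|^{2}\mathrm{E}_{1}$ (the Levi-Civita analogue) and instead obtain the correct mixed quantity $g(\mathcal{H},\mathcal{H}^{\ast})\mathrm{E}_{1}$, which is what ultimately pairs with the $-1$ to produce the claimed constant $g(\mathcal{H},\mathcal{H}^{\ast})-1$.
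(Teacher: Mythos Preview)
Your proof is correct and follows essentially the same approach as the paper: substitute the umbilicity hypothesis (2) and $\overline{c}=-1$ into the Gauss equation (\ref{i}) to obtain the constant-curvature form. The only cosmetic difference is that the paper applies (2) first and (1) second, while you reverse the order; your write-up is in fact more explicit than the paper's in justifying why the shape-operator terms collapse to $g(\mathcal{H},\mathcal{H}^{\ast})\{g(\mathrm{F}_{1},\mathrm{G}_{1})\mathrm{E}_{1}-g(\mathrm{E}_{1},\mathrm{G}_{1})\mathrm{F}_{1}\}$ via the cross-duality relations.
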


\begin{proof}
By using equation (\ref{i}) and given condition (2), we get
\begin{eqnarray*}
R(\mathrm{E}_{1}, \mathrm{F}_{1})\mathrm{G}_{1} &=& \frac{\overline{c}-3}{4} [g(\mathrm{F}_{1}, \mathrm{G}_{1})\mathrm{E}_{1} - g(\mathrm{E}_{1}, \mathrm{G}_{1})\mathrm{F}_{1}] \\
&&+ g(\mathcal{H}, \mathcal{H}^{\ast}) [g(\mathrm{F}_{1}, \mathrm{G}_{1})\mathrm{E}_{1} - g(\mathrm{E}_{1}, \mathrm{G}_{1})\mathrm{F}_{1} ].
\end{eqnarray*}
Taking into account of condition (1), we obtain our assertion, that is,
\begin{eqnarray*}
R(\mathrm{E}_{1}, \mathrm{F}_{1})\mathrm{G}_{1} = (g(\mathcal{H}, \mathcal{H}^{\ast})-1) [g(\mathrm{F}_{1}, \mathrm{G}_{1})\mathrm{E}_{1} - g(\mathrm{E}_{1}, \mathrm{G}_{1})\mathrm{F}_{1} ],
\end{eqnarray*}
for any $\mathrm{E}_{1}, \mathrm{F}_{1}, \mathrm{G}_{1} \in \Gamma(TN)$.
\end{proof}

\section{Chen-Ricci Inequality for Statistical Submanifolds}

\subsection*{Proof of Theorem \ref{th}:}

We choose $\{e_{1}, \dots, e_{k+1}\}$ as the orthonormal frame of $T_{p}N$ such that $e_{1} = \mathrm{E}_{1}$ and $||\mathrm{E}_{1}|| = 1$, and $\{e_{k+2}, \dots, e_{2s+1}\}$ as the the orthonormal frame of $T_{p}N^{\perp}$. Then by formulae (\ref{i111})-(\ref{r2}), we have
\begin{eqnarray*}
2\overline{S}(e_{1}, e_{i}, e_{1}, e_{i}) &=& 2S(e_{1}, e_{i}, e_{1}, e_{i}) - g(h(e_{1}, e_{1}), h^{\ast }(e_{i}, e_{i}))\\
&& - g(h^{\ast }(e_{1}, e_{1}), h(e_{i}, e_{i}))
+ 2g(h(e_{1}, e_{i}), h^{\ast }(e_{1}, e_{i}))\\
&=& 2S(e_{1}, e_{i}, e_{1}, e_{i}) - \{4g(h^{0}(e_{1}, e_{1}), h^{0}(e_{i}, e_{i}))\\
&&- g(h(e_{1}, e_{1}), h(e_{i}, e_{i})) - g(h^{\ast }(e_{1}, e_{1}), h^{\ast }(e_{i}, e_{i}))\\
&&- 4g(h^{0}(e_{1}, e_{i}), h^{0}(e_{1}, e_{i})) + g(h(e_{1}, e_{i}), h(e_{1}, e_{i}))\\
&&+ g(h^{\ast }(e_{1}, e_{i}), h^{\ast }(e_{1}, e_{i}))\}\\
&=& 2S(e_{1}, e_{i}, e_{1}, e_{i}) - 4\sum_{r=k+1}^{s}(h^{0r}_{11}h^{0r}_{ii} - (h^{0r}_{1i})^{2})\\
&&+ \sum_{r=k+1}^{s}(h^{r}_{11}h^{r}_{ii} - (h^{r}_{1i})^{2}) + \sum_{r=k+1}^{s}(h^{\ast r}_{11}h^{\ast r}_{ii} - (h^{\ast r}_{1i})^{2}),
\end{eqnarray*}
where we have used the notation
\begin{eqnarray*}
\overline{S}(\mathrm{E}_{1},\mathrm{F}_{1},\mathrm{G}_{1},\mathrm{H}_{1}) = \overline{g}(\overline{S}(\mathrm{E}_{1},\mathrm{F}_{1})\mathrm{H}_{1},\mathrm{G}_{1}).
\end{eqnarray*}
Summing over $2 \leq i \leq k+1$ and using (\ref{d}), we arrive at
\begin{eqnarray*}
&&2\{\frac{3(\overline{c}+1)}{4}||\mathcal{P} \mathrm{E}_{1}||^{2} + \frac{k}{4}[(\overline{c}+1)(1-g^{2}(\mathrm{E}_{1},\xi))-4]\} \\
&=& 2Ric^{\nabla, \nabla^{\ast }}(\mathrm{E}_{1}) - 4\sum_{r=k+2}^{2s+1}\sum_{i=2}^{k+1}(h^{0r}_{11}h^{0r}_{ii} - (h^{0r}_{1i})^{2}) \\
&&+ \sum_{r=k+2}^{2s+1}\sum_{i=2}^{k+1}(h^{r}_{11}h^{r}_{ii} - (h^{r}_{1i})^{2}) + \sum_{r=k+2}^{2s+1}\sum_{i=2}^{k+1}(h^{\ast r}_{11}h^{\ast r}_{ii} - (h^{\ast r}_{1i})^{2}),
\end{eqnarray*}
where $Ric^{\nabla, \nabla^{\ast }}(\mathrm{E}_{1})$ denotes the Ricci curvature of $N$ with respect to $\nabla$ and $\nabla^{\ast }$ at $p$. Further, we derive
\begin{eqnarray}\label{g2}
&&2Ric^{\nabla, \nabla^{\ast }}(\mathrm{E}_{1}) - 2\{\frac{3(\overline{c}+1)}{4}||\mathcal{P} \mathrm{E}_{1}||^{2} + \frac{k}{4}[(\overline{c}+1)(1-g^{2}(\mathrm{E}_{1},\xi))-4]\} \nonumber \\
&=& 4\sum_{r=k+2}^{2s+1}\sum_{i=2}^{k+1}(h^{0r}_{11}h^{0r}_{ii} - (h^{0r}_{1i})^{2}) - \sum_{r=k+2}^{2s+1}\sum_{i=2}^{k+1}(h^{r}_{11}h^{r}_{ii} - (h^{r}_{1i})^{2})\nonumber \\
&&- \sum_{r=k+2}^{2s+1}\sum_{i=2}^{k+1}(h^{\ast r}_{11}h^{\ast r}_{ii} - (h^{\ast r}_{1i})^{2}).
\end{eqnarray}
By Gauss equation with respect to $\overline{\nabla}^{\overline{g}}$, it follows that
\begin{eqnarray*}
&&Ric^{0}(\mathrm{E}_{1}) - \{\frac{3(\overline{c}+1)}{4}||\mathcal{P} \mathrm{E}_{1}||^{2} + \frac{k}{4}[(\overline{c}+1)(1-g^{2}(\mathrm{E}_{1},\xi))-4]\}\\
&=& \sum_{r=k+2}^{2s+1}\sum_{i=2}^{k+1}(h^{0r}_{11}h^{0r}_{ii} - (h^{0r}_{1i})^{2}).
\end{eqnarray*}
Substituting into (\ref{g2}), we arrive at
\begin{eqnarray*}
&&2Ric^{\nabla, \nabla^{\ast }}(\mathrm{E}_{1}) - 2\{\frac{3(\overline{c}+1)}{4}||\mathcal{P} \mathrm{E}_{1}||^{2} + \frac{k}{4}[(\overline{c}+1)(1-g^{2}(\mathrm{E}_{1},\xi))-4]\} \\
&=& 4Ric^{0}(\mathrm{E}_{1}) - \{3(\overline{c}+1)||\mathcal{P} \mathrm{E}_{1}||^{2} + k[(\overline{c}+1)(1-g^{2}(\mathrm{E}_{1},\xi))-4]\} \nonumber \\
&&- \sum_{r=k+2}^{2s+1}\sum_{i=2}^{k+1}(h^{r}_{11}h^{r}_{ii} - (h^{r}_{1i})^{2}) - \sum_{r=k+2}^{2s+1}\sum_{i=2}^{k+1}(h^{\ast r}_{11}h^{\ast r}_{ii} - (h^{\ast r}_{1i})^{2}).
\end{eqnarray*}
On simplifying the previous relation, we get
\begin{eqnarray}\label{g3}
&&-Ric^{\nabla, \nabla^{\ast }}(\mathrm{E}_{1}) - \{\frac{3(\overline{c}+1)}{4}||\mathcal{P} \mathrm{E}_{1}||^{2} + \frac{k}{4}[(\overline{c}+1)(1-g^{2}(\mathrm{E}_{1},\xi))-4]\} \nonumber \\
&&+ 2Ric^{0}(\mathrm{E}_{1}) \nonumber \\
&=& \sum_{r=k+2}^{2s+1}\sum_{i=2}^{k+1}(h^{r}_{11}h^{r}_{ii} - (h^{r}_{1i})^{2}) + \sum_{r=k+2}^{2s+1}\sum_{i=2}^{k+1}(h^{\ast r}_{11}h^{\ast r}_{ii} - (h^{\ast r}_{1i})^{2})\nonumber \\
&\leq& \sum_{r=k+2}^{2s+1}\sum_{i=2}^{k+1}h^{r}_{11}h^{r}_{ii} + \sum_{r=k+2}^{2s+1}\sum_{i=2}^{k+1}h^{\ast r}_{11}h^{\ast r}_{ii}.
\end{eqnarray}
Let us define the quadratic form $\theta_{r}, \theta_{r}^{\ast} : \mathbb{R}^{k+1} \rightarrow \mathbb{R}$ by
$$\theta_{r}(h_{11}^{r}, h_{22}^{r}, \dots, h_{kk}^{r}) = \sum_{r=k+2}^{2s+1}\sum_{i=2}^{k+1}h^{r}_{11}h^{r}_{ii},$$
$$\theta_{r}^{\ast }(h_{11}^{\ast r}, h_{22}^{\ast r}, \dots, h_{kk}^{\ast r}) = \sum_{r=k+2}^{2s+1}\sum_{i=2}^{k+1}h^{\ast r}_{11}h^{\ast r}_{ii}.$$
We consider the constrained extremum problem $\max \theta_{r}$ subject to $$Q : \sum_{i=1}^{k+1}h^{r}_{ii} = a^{r},$$ where $a^{r}$ is a real constant. The gradient vector field of the function $\theta_{r}$ is given by $$grad \hspace{0.1 cm} \theta_{r} = (\sum_{i=2}^{k+1}h^{r}_{ii}, h^{r}_{11}, h^{r}_{11}, \dots, h^{r}_{11}).$$ For an optimal solution $p = (h_{11}^{r}, h_{22}^{r}, \dots h_{kk}^{r})$ of the problem in question, the vector $grad \hspace{0.1 cm} \theta_{r}$ is normal to $Q$ at the point $p$. It follows that $$h^{r}_{11} = \sum_{i=2}^{k+1}h^{r}_{ii} = \frac{a^{r}}{2}.$$
Now, we fix $x \in Q$. The bilinear form $\pi : T_{x}Q \times T_{x}Q \rightarrow \mathbb{R}$ has the following expression:
\begin{eqnarray*}
\pi(\mathrm{E}_{1}, \mathrm{F}_{1}) = Hess_{\theta_{r}}(\mathrm{E}_{1}, \mathrm{F}_{1}) + <h^{'}(\mathrm{E}_{1}, \mathrm{F}_{1}), (grad \hspace{0.1 cm} \theta_{r})(x)>,
\end{eqnarray*}
where $h^{'}$ denotes the second fundamental form of $Q$ in $\mathbb{R}^{k+1}$ and $<\cdot,\cdot>$ denotes the standard inner product on $\mathbb{R}^{k+1}$. The Hessian matrix of $\theta_{r}$ is given by
\begin{eqnarray*}
Hess_{\theta_{r}} = \left(
 \begin{array}{ccccc}
    0 & 1 & \dots & 1 \\
    1 & 0 & \dots& 0 \\
    \vdots & \vdots \ & \ddots & \vdots \\
    1 & 0  & \dots &  0 \\
    1 & 0  & \dots &  0
  \end{array}
\right).
\end{eqnarray*}
We consider a vector $\mathrm{E}_{1} \in T_{x}Q$, which satisfies a relation $\mathrm{E}_{2} + \dots + \mathrm{E}_{k+1} = -\mathrm{E}_{1}$. As $h^{'} = 0$ in $\mathbb{R}^{k+1}$, we get
\begin{eqnarray*}
\pi(\mathrm{E}_{1}, \mathrm{E}_{1}) = Hess_{\theta_{r}}(\mathrm{E}_{1}, \mathrm{E}_{1}) &=& 2 \mathrm{E}_{1}(\mathrm{E}_{2} + \dots + \mathrm{E}_{k+1}) \\
&=& (\mathrm{E}_{1} + \mathrm{E}_{2} + \dots + \mathrm{E}_{k+1})^{2} \\
&&- (\mathrm{E}_{1})^{2} - (\mathrm{E}_{2} + \dots + \mathrm{E}_{k+1})^{2}\\
&=& - 2(\mathrm{E}_{1})^{2} \\
&\leq& 0.
\end{eqnarray*}
However, the point $p$ is the only optimal solution, i.e., the global maximum point of problem. Thus, we obtain
\begin{eqnarray}\label{g4}
\theta_{r} \leq \frac{1}{4}(\sum_{i=1}^{k+1} h_{ii}^{r})^{2} = \frac{(k+1)^{2}}{4}(\mathcal{H}^{r})^{2}.
\end{eqnarray}
Next, we deal with the constrained extremum problem $\max \theta_{r}^{\ast}$ subject to $$Q^{\ast} : \sum_{i=1}^{k+1}h^{\ast r}_{ii} = a^{\ast r},$$ where $a^{\ast r}$ is a real constant. By similar arguments as above, we find
\begin{eqnarray}\label{g410}
\theta_{r}^{\ast} \leq \frac{1}{4}(\sum_{i=1}^{k+1} h_{ii}^{\ast r})^{2} = \frac{(k+1)^{2}}{4}(\mathcal{H}^{\ast r})^{2}.
\end{eqnarray}
On combining (\ref{g3}), (\ref{g4}) and (\ref{g410}), we get our desired inequality (\ref{r}). Moreover, the vector field $\mathrm{E}_{1}$ satisfies the equality case if and only if
$$h_{1i}^{r} = 0,  \textrm{             } h_{1i}^{\ast r} = 0, \textrm{                 } i \in \{2, \dots, k+1\}$$
and
$$h^{r}_{11} = (\frac{k+1}{2})\mathcal{H},  \textrm{             } h^{\ast r}_{11} = (\frac{k+1}{2})\mathcal{H}^{\ast}$$
Thus, it proves our assertion.

\section{Conclusions and Remarks}

\begin{remark}
We recall some important results given by Furuhata et al. \cite{f2} and add more new results to a Kenmotsu statistical manifold (the warped product of a holomorphic statistical manifold and a line) of constant $\phi-$sectional curvature with some related examples. It is known that the Ricci tensor of a statistical manifold is not symmetric, unlike the Riemannian case where the Ricci tensor of the Riemannian connection is symmetric and has a precise geometric and physical meaning. For a torsion-free affine connection on a simply connected $n-$manifold, the Ricci tensor is symmetric if and only if the connection preserves a volume $n-$form. But this is not true in the case of Kenmotsu statistical manifolds. So, it is unnatural to consider the condition that Ricci tenor to be proportional with the metric tensor. We also conclude that a Kenmotsu statistical manifold of constant $\phi-$sectional curvature can not be a Ricci-flat statistical manifold (by Proportion \ref{pr} (1)). For instance, a Kenmotsu statistical manifold $(\mathbb{H}^{2s+1}, \overline{\nabla} = \nabla^{\overline{g}} + \overline{\mathbb{K}}, \overline{g}, \phi, \xi)$ of constant $\phi-$sectional curvature $-1$ is not a Ricci-flat statistical manifold.
\end{remark}

\begin{remark}
As we know that $2\mathcal{H}^{0} = \mathcal{H} + \mathcal{H}^{\ast }$. Then the inequality (\ref{r}) can be rewritten as

\begin{corollary}
Let $(\overline{B}(\overline{c}), \overline{\nabla}, \overline{g}, \phi, \xi)$ be a $(2s+1)-$dimensional Kenmotsu statistical manifold of constant $\phi-$sectional curvature $\overline{c}$ and $(N, \nabla, g)$ be an $(k+1)-$dimensional statistical submanifold in $\overline{B}(\overline{c})$. Then for each unit vector $\mathrm{E}_{1} \in T_{p}N$, we have
\begin{eqnarray*}
Ric^{\nabla, \nabla^{\ast }}(\mathrm{E}_{1}) &\geq& 2Ric^{0}(\mathrm{E}_{1}) - \{\frac{3(\overline{c}+1)}{4}||\mathcal{P} \mathrm{E}_{1}||^{2} + \frac{k}{4}[(\overline{c}+1)(1-g^{2}(\mathrm{E}_{1},\xi))-4]\} \nonumber \\
&&- \frac{(k+1)^{2}}{2} ||\mathcal{H}^{0}||^{2} + \frac{(k+1)^{2}}{4}g(\mathcal{H}, \mathcal{H}^{\ast}).
\end{eqnarray*}
\end{corollary}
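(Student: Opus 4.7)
The corollary is a direct algebraic reformulation of the inequality (\ref{r}) established in Theorem \ref{th}, so the plan is to substitute the only term that refers to $\mathcal{H}$ and $\mathcal{H}^{\ast}$ separately, namely $\|\mathcal{H}\|^{2}+\|\mathcal{H}^{\ast}\|^{2}$, by an expression in the Levi--Civita mean curvature $\mathcal{H}^{0}$. The bridge is the identity $2\mathcal{H}^{0}=\mathcal{H}+\mathcal{H}^{\ast}$, which was recorded in the Preliminaries directly from Remark \ref{g}(2) (the averaging of the dual connections).

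First I would square this identity with respect to $\overline{g}$ and use bilinearity of the inner product to obtain the polarization-type relation
\begin{equation*}
4\|\mathcal{H}^{0}\|^{2}
= \|\mathcal{H}+\mathcal{H}^{\ast}\|^{2}
= \|\mathcal{H}\|^{2}+\|\mathcal{H}^{\ast}\|^{2}+2\,g(\mathcal{H},\mathcal{H}^{\ast}),
\end{equation*}
so that
\begin{equation*}
\|\mathcal{H}\|^{2}+\|\mathcal{H}^{\ast}\|^{2}
= 4\|\mathcal{H}^{0}\|^{2}-2\,g(\mathcal{H},\mathcal{H}^{\ast}).
\end{equation*}

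Next I would multiply by $-\tfrac{(k+1)^{2}}{8}$ and insert the result into the last term on the right-hand side of (\ref{r}), which yields
\begin{equation*}
-\tfrac{(k+1)^{2}}{8}\bigl[\|\mathcal{H}\|^{2}+\|\mathcal{H}^{\ast}\|^{2}\bigr]
= -\tfrac{(k+1)^{2}}{2}\|\mathcal{H}^{0}\|^{2}
+\tfrac{(k+1)^{2}}{4}\,g(\mathcal{H},\mathcal{H}^{\ast}).
\end{equation*}
All remaining terms in (\ref{r})---the $2\,Ric^{0}(\mathrm{E}_{1})$ contribution, the curvature bracket involving $\overline{c}$, $\|\mathcal{P}\mathrm{E}_{1}\|^{2}$ and $g^{2}(\mathrm{E}_{1},\xi)$---are untouched and carry over verbatim. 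Combining the two displays gives exactly the inequality claimed in the corollary.

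No new estimate is needed; the only care point is keeping the numerical coefficients straight when folding the factor $-\tfrac{(k+1)^{2}}{8}$ through $4\|\mathcal{H}^{0}\|^{2}-2\,g(\mathcal{H},\mathcal{H}^{\ast})$, and making sure no additional hypothesis beyond those of Theorem \ref{th} is implicitly invoked. Since $2\mathcal{H}^{0}=\mathcal{H}+\mathcal{H}^{\ast}$ holds on any statistical submanifold (it is a consequence of $2\overline{\nabla}^{\overline{g}}=\overline{\nabla}+\overline{\nabla}^{\ast}$ applied in the Gauss formulae), the hypotheses of the corollary match those of Theorem \ref{th} exactly, so the rewriting is valid in full generality and there is no genuine obstacle.
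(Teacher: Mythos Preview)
Your proposal is correct and follows exactly the paper's approach: the paper simply remarks that, since $2\mathcal{H}^{0}=\mathcal{H}+\mathcal{H}^{\ast}$, the inequality (\ref{r}) can be rewritten in the stated form, and your squaring/polarization computation makes this one-line observation explicit.
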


\begin{corollary}
Let $(\overline{B}(\overline{c}), \overline{\nabla}, \overline{g}, \phi, \xi)$ be a $(2s+1)-$dimensional Kenmotsu statistical manifold of constant $\phi-$sectional curvature $\overline{c}$ and $(N, \nabla, g)$ be an $(k+1)-$dimensional statistical submanifold in $\overline{B}(\overline{c})$. Suppose that $N$ is minimal with respect to $\overline{\nabla}^{\overline{g}}$, then for each unit vector $\mathrm{E}_{1} \in T_{p}N$, we have
\begin{eqnarray*}
Ric^{\nabla, \nabla^{\ast }}(\mathrm{E}_{1}) &\geq& 2Ric^{0}(\mathrm{E}_{1}) - \{\frac{3(\overline{c}+1)}{4}||\mathcal{P} \mathrm{E}_{1}||^{2} + \frac{k}{4}[(\overline{c}+1)(1-g^{2}(\mathrm{E}_{1},\xi))-4]\} \nonumber \\
&& + \frac{(k+1)^{2}}{4}g(\mathcal{H}, \mathcal{H}^{\ast}).
\end{eqnarray*}
\end{corollary}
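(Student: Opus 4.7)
The plan is to obtain the stated inequality as an immediate specialization of the preceding corollary. The only piece of new information we have is the hypothesis that $N$ is minimal with respect to the Levi--Civita connection $\overline{\nabla}^{\overline{g}}$, so the strategy is simply to translate this hypothesis into a statement about $\mathcal{H}^{0}$ and substitute it into the bound proved a few lines earlier.

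First I would recall the identity established in the preliminaries,
\begin{equation*}
2\mathcal{H}^{0} = \mathcal{H} + \mathcal{H}^{\ast},
\end{equation*}
together with the definition of minimality in the Riemannian sense: $N$ minimal with respect to $\overline{\nabla}^{\overline{g}}$ means precisely that the Levi--Civita mean curvature vector $\mathcal{H}^{0}$ vanishes identically on $N$. In particular, at the point $p$ we have $\|\mathcal{H}^{0}\|^{2} = 0$. Note that this does \emph{not} force either $\mathcal{H}$ or $\mathcal{H}^{\ast}$ to vanish individually; it only gives $\mathcal{H}^{\ast} = -\mathcal{H}$, which is exactly the information that will cause one term (and only one term) to drop out of the previous estimate.

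Next I would invoke the preceding corollary verbatim. It asserts
\begin{eqnarray*}
Ric^{\nabla, \nabla^{\ast }}(\mathrm{E}_{1}) &\geq& 2Ric^{0}(\mathrm{E}_{1}) - \left\{\tfrac{3(\overline{c}+1)}{4}\|\mathcal{P} \mathrm{E}_{1}\|^{2} + \tfrac{k}{4}[(\overline{c}+1)(1-g^{2}(\mathrm{E}_{1},\xi))-4]\right\} \\
&& - \tfrac{(k+1)^{2}}{2} \|\mathcal{H}^{0}\|^{2} + \tfrac{(k+1)^{2}}{4}g(\mathcal{H}, \mathcal{H}^{\ast}).
\end{eqnarray*}
Substituting $\|\mathcal{H}^{0}\|^{2}=0$ eliminates the middle term on the right-hand side, and what remains is verbatim the inequality of the corollary.

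The main obstacle is essentially nonexistent: the result is a one-line corollary whose proof is a substitution. The only conceptual point worth flagging is the one noted above, namely that minimality is imposed with respect to the Levi--Civita connection (the only connection among the three that is metric, and hence the only one for which the classical notion of minimal submanifold is meaningful), so we use $\mathcal{H}^{0}=0$ and leave the cross term $g(\mathcal{H},\mathcal{H}^{\ast})$ untouched rather than trying to kill either factor separately.
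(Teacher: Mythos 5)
Your proposal is correct and matches the paper's (implicit) argument exactly: the paper derives this corollary by rewriting the bound of Theorem \ref{th} via $2\mathcal{H}^{0}=\mathcal{H}+\mathcal{H}^{\ast}$ to obtain the preceding corollary, and then setting $\mathcal{H}^{0}=0$ under the minimality hypothesis. Your observation that minimality with respect to $\overline{\nabla}^{\overline{g}}$ kills only the $\|\mathcal{H}^{0}\|^{2}$ term while leaving $g(\mathcal{H},\mathcal{H}^{\ast})$ intact is precisely the point.
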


Further, we derive

\begin{corollary}
Let $(\overline{B}(\overline{c}), \overline{\nabla}, \overline{g}, \phi, \xi)$ be a $(2s+1)-$dimensional Kenmotsu statistical manifold of constant $\phi-$sectional curvature $\overline{c}$ and $(N, \nabla, g)$ be an $(k+1)-$dimensional statistical submanifold in $\overline{B}(\overline{c})$.
\begin{enumerate}
\item[(1)] For each unit vector $\mathrm{E}_{1} \in T_{p}N$ orthogonal to $\xi$, we have
\begin{eqnarray}\label{rrr}
Ric^{\nabla, \nabla^{\ast }}(\mathrm{E}_{1}) &\geq& 2Ric^{0}(\mathrm{E}_{1}) - \{\frac{3(\overline{c}+1)}{4}||\mathcal{P} \mathrm{E}_{1}||^{2} + \frac{(\overline{c}-3)k}{4}\} \nonumber \\
&&- \frac{(k+1)^{2}}{8} [||\mathcal{H}||^{2} + ||\mathcal{H}^{\ast }||^{2}].
\end{eqnarray}
\begin{enumerate}
\item[(1.1)] If $N$ is invariant, then
\begin{eqnarray}\label{rr1}
Ric^{\nabla, \nabla^{\ast }}(\mathrm{E}_{1}) &\geq& 2Ric^{0}(\mathrm{E}_{1}) - \frac{\overline{c}(k+3)+3(1-k)}{4} \nonumber \\
&&- \frac{(k+1)^{2}}{8} [||\mathcal{H}||^{2} + ||\mathcal{H}^{\ast }||^{2}].
\end{eqnarray}
\item[(1.2)] If $N$ is anti-invariant, then
\begin{eqnarray}\label{rr2}
Ric^{\nabla, \nabla^{\ast }}(\mathrm{E}_{1}) &\geq& 2Ric^{0}(\mathrm{E}_{1}) - \frac{k(\overline{c}-3)}{4} \nonumber \\
&&- \frac{(k+1)^{2}}{8} [||\mathcal{H}||^{2} + ||\mathcal{H}^{\ast }||^{2}].
\end{eqnarray}
\end{enumerate}
\item[(2)] Moreover, the equality holds in the inequalities (\ref{rrr})-(\ref{rr2}) if and only if $$2h(\mathrm{E}_{1}, \mathrm{E}_{1}) = (k+1) \mathcal{H}(p),$$ $$2h^{\ast}(\mathrm{E}_{1}, \mathrm{E}_{1}) = (k+1) \mathcal{H}^{\ast}(p)$$ and $$h(\mathrm{E}_{1}, \mathrm{F}_{1}) =0,$$ $$h^{\ast }(\mathrm{E}_{1}, \mathrm{F}_{1})=0,$$ for all $\mathrm{F}_{1} \in T_{p}N$ orthogonal to $\mathrm{E}_{1}$.
\end{enumerate}
\end{corollary}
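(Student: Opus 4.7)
The plan is to obtain each of the three inequalities (\ref{rrr}), (\ref{rr1}), (\ref{rr2}) as a direct specialization of the parent bound (\ref{r}) of Theorem \ref{th}, by evaluating the two $\mathrm{E}_1$-dependent quantities $g^2(\mathrm{E}_1, \xi)$ and $||\mathcal{P}\mathrm{E}_1||^2$ under the three stated restrictions. Since none of these restrictions touches $h$, $h^\ast$, $\mathcal{H}$, or $\mathcal{H}^\ast$, the equality characterization of Theorem \ref{th}(2) transfers verbatim to give part (2) of the corollary, so I need only perform the three algebraic reductions.

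First, for (\ref{rrr}), imposing $g(\mathrm{E}_1, \xi) = 0$ on (\ref{r}) collapses the second brace to $\frac{k}{4}[(\overline{c}+1) - 4] = \frac{(\overline{c}-3)k}{4}$, while leaving the coefficient of $||\mathcal{P}\mathrm{E}_1||^2$ untouched; substitution produces (\ref{rrr}) on the nose. Second, for the invariant case (1.1), $\phi(TN) \subset TN$ gives $C\mathrm{E}_1 = 0$, so $\mathcal{P}\mathrm{E}_1 = \phi \mathrm{E}_1$; combining the standard almost-contact compatibility $\overline{g}(\phi \mathrm{E}_1, \phi \mathrm{E}_1) = ||\mathrm{E}_1||^2 - g^2(\mathrm{E}_1, \xi)$ with $||\mathrm{E}_1|| = 1$ and $\mathrm{E}_1 \perp \xi$ yields $||\mathcal{P}\mathrm{E}_1||^2 = 1$, and then fusing the two scalar contributions in (\ref{rrr}) via
\begin{eqnarray*}
\frac{3(\overline{c}+1)}{4} + \frac{(\overline{c}-3)k}{4} = \frac{3\overline{c} + 3 + \overline{c}k - 3k}{4} = \frac{\overline{c}(k+3) + 3(1-k)}{4}
\end{eqnarray*}
gives (\ref{rr1}). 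Third, for the anti-invariant case (1.2), $\phi \mathrm{E}_1 \in \Gamma(TN^\perp)$ forces $P = 0$, so $||\mathcal{P}\mathrm{E}_1||^2 = 0$ and (\ref{rrr}) collapses directly to (\ref{rr2}).

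For part (2), the equality conditions of Theorem \ref{th}(2) depend only on the second fundamental forms and the chosen unit vector $\mathrm{E}_1$, not on $\overline{c}$, $\mathcal{P}\mathrm{E}_1$, or $g(\mathrm{E}_1, \xi)$, so they apply unchanged to each of (\ref{rrr})--(\ref{rr2}). The only substantive ingredient in the whole argument is the compatibility identity $||\phi \mathrm{E}_1||^2 = ||\mathrm{E}_1||^2 - g^2(\mathrm{E}_1, \xi)$ of the almost contact metric structure $(\phi, \xi, \overline{g})$; this is standard, and once invoked the corollary reduces to elementary substitution in each of the three cases.
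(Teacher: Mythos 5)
Your proposal is correct and follows exactly the route the paper intends: the corollary is stated as a direct specialization of Theorem \ref{th}, and your three substitutions ($g(\mathrm{E}_{1},\xi)=0$; $||\mathcal{P}\mathrm{E}_{1}||^{2}=1$ via the almost contact compatibility $\overline{g}(\phi \mathrm{E}_{1},\phi \mathrm{E}_{1})=||\mathrm{E}_{1}||^{2}-g^{2}(\mathrm{E}_{1},\xi)$ in the invariant case; $||\mathcal{P}\mathrm{E}_{1}||^{2}=0$ in the anti-invariant case) together with the observation that the equality conditions are untouched reproduce the paper's (unwritten) argument with the algebra checked. Nothing is missing.
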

\end{remark}

\begin{remark}\label{r4}
By Theorem \ref{p1}, Proposition 3.9 of \cite{f2}, we say that $\overline{c} = -1$ and $(\tilde{B}, \tilde{\nabla}, \tilde{g}, J)$ is of constant holomorphic sectional curvature $0$. Thus, from Theorem \ref{th}, one can easily obtain the following result:

\begin{corollary}
Let $(\tilde{B}, \tilde{\nabla} = \tilde{\nabla}^{\tilde{g}} + \tilde{\mathbb{K}}, \tilde{g}, J)$ and $(\overline{B}, \overline{\nabla} = \overline{\nabla}^{\overline{g}} + \overline{\mathbb{K}}, \overline{g}, \phi, \xi)$ be a holomorphic statistical manifold of constant holomorphic sectional curvature $0$, and be the Kenmotsu statistical manifold of constant $\phi-$sectional curvature $-1$ as in Proposition 3.9 of \cite{f2}, respectivley. If $(N, \nabla, g)$ is a statistical submanifold in $\overline{B}$ with dim$(\overline{B})=2s+1$ and dim$(N) = k+1$, then
\begin{enumerate}
\item[(1)] For each unit vector $\mathrm{E}_{1} \in T_{p}N$, $p \in N$,
\begin{eqnarray}\label{rr}
Ric^{\nabla, \nabla^{\ast }}(\mathrm{E}_{1}) \geq 2Ric^{0}(\mathrm{E}_{1}) + 4 - \frac{(k+1)^{2}}{8} [||\mathcal{H}||^{2} + ||\mathcal{H}^{\ast }||^{2}].
\end{eqnarray}
\item[(2)] Moreover, the equality holds in the inequality (\ref{rr}) if and only if $$2h(\mathrm{E}_{1}, \mathrm{E}_{1}) = (k+1) \mathcal{H}(p),$$ $$2h^{\ast}(\mathrm{E}_{1}, \mathrm{E}_{1}) = (k+1) \mathcal{H}^{\ast}(p)$$ and $$h(\mathrm{E}_{1}, \mathrm{F}_{1}) =0,$$ $$h^{\ast }(\mathrm{E}_{1}, \mathrm{F}_{1})=0,$$ for all $\mathrm{F}_{1} \in T_{p}N$ orthogonal to $\mathrm{E}_{1}$.
\end{enumerate}
\end{corollary}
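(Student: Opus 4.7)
The plan is to invoke Theorem~\ref{th} and specialize the curvature parameter. By Theorem~\ref{p1} together with Proposition 3.9 of \cite{f2}, the construction of $\overline{B}$ from a holomorphic statistical manifold of constant holomorphic sectional curvature $0$ as a warped product with $\mathbb{R}$ forces the ambient Kenmotsu statistical manifold $\overline{B}$ to be of constant $\phi$--sectional curvature $\overline{c}=-1$. This places us exactly in the hypotheses of Theorem~\ref{th}, so the inequality (\ref{r}) applies verbatim to the statistical submanifold $(N,\nabla,g)$ of dimension $k+1$ in $\overline{B}$ of dimension $2s+1$.

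The essential step is then a direct substitution. Setting $\overline{c}=-1$ in (\ref{r}): the coefficient $\frac{3(\overline{c}+1)}{4}$ vanishes, eliminating the $||\mathcal{P}\mathrm{E}_{1}||^{2}$ term, while $(\overline{c}+1)(1-g^{2}(\mathrm{E}_{1},\xi))=0$, reducing the bracket $\frac{k}{4}[(\overline{c}+1)(1-g^{2}(\mathrm{E}_{1},\xi))-4]$ to $-k$. Carrying the outer minus sign through (\ref{r}) converts this ambient contribution into $+k$, yielding
\begin{eqnarray*}
Ric^{\nabla, \nabla^{\ast}}(\mathrm{E}_{1}) \geq 2Ric^{0}(\mathrm{E}_{1}) + k - \frac{(k+1)^{2}}{8}[||\mathcal{H}||^{2} + ||\mathcal{H}^{\ast}||^{2}],
\end{eqnarray*}
which is the claimed (\ref{rr}) (with the constant being $k$ rather than the printed $4$, a minor typographical discrepancy in the statement; no independent derivation produces a $4$ from this substitution).

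Part (2), the equality characterization, requires no new argument. The optimization analysis of the quadratic forms $\theta_{r}$ and $\theta_{r}^{\ast}$ at the end of the proof of Theorem~\ref{th} is entirely independent of the value of $\overline{c}$, since that value only modifies the constant ambient terms and not the estimate $\theta_{r}\leq \frac{(k+1)^{2}}{4}(\mathcal{H}^{r})^{2}$ (and its dual). Consequently the conditions $2h(\mathrm{E}_{1},\mathrm{E}_{1}) = (k+1)\mathcal{H}(p)$, $2h^{\ast}(\mathrm{E}_{1},\mathrm{E}_{1}) = (k+1)\mathcal{H}^{\ast}(p)$, and $h(\mathrm{E}_{1},\mathrm{F}_{1}) = h^{\ast}(\mathrm{E}_{1},\mathrm{F}_{1}) = 0$ for every $\mathrm{F}_{1}\in T_{p}N$ orthogonal to $\mathrm{E}_{1}$ transfer verbatim to the present setting.

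The one thing worth verifying, and the only place any subtlety could hide, is the compatibility assertion that the hypothesis on $\tilde{B}$ is consistent with $\overline{c}=-1$. This is precisely the content of Proposition 3.9 of \cite{f2}, which relates the holomorphic sectional curvature of $\tilde{B}$ to the $\phi$--sectional curvature of $\overline{B}$ through the warping factor $e^{2\alpha}$; once this compatibility is recorded, no residual curvature term enters the inequality and the proof reduces to the one-line substitution above.
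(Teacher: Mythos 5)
Your proposal is correct and follows exactly the route the paper takes: the paper's own ``proof'' of this corollary is precisely the observation that Proposition 3.9 of \cite{f2} forces $\overline{c}=-1$, followed by substitution into Theorem~\ref{th}, with the equality case inherited unchanged. You are also right about the constant: setting $\overline{c}=-1$ in (\ref{r}) turns $-\frac{k}{4}[(\overline{c}+1)(1-g^{2}(\mathrm{E}_{1},\xi))-4]$ into $+k$, not $+4$, so the printed $4$ in (\ref{rr}) is an error (the authors apparently dropped the factor $\frac{k}{4}$), and your version with $+k$ is the one that actually follows from Theorem~\ref{th}.
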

\end{remark}

\section*{Acknowledgment}
The second author was supported by the grant Project No. 2018-R1D1A1B-05040381 from National Research Foundation.

\end{document}